\newtheorem{thm}{Theorem}[section]
\newtheorem{lem}{Lemma}[section]
\newtheorem{prop}{Proposition}[section]
\newtheorem{defn}{Definition}[section]
\newtheorem{rem}{Remark}[section]
\def\d{\partial}
\def\ddj{\dot \Delta_j}
\newcommand\R{\mathbb{R}}
\newcommand\Z{\mathbb{Z}}
\newcommand{\myref}[1]{}
\renewcommand{\div}{\mbox{\rm div}\;\!}
\def\cQ{{\mathcal Q}}
\begin{document}

\title{Decay of higher order derivatives for $L^p$ solutions to the compressible fluid model of Korteweg type}

\author{Zihao Song,\, Jiang Xu}

\date{}
\maketitle

\begin{abstract}
We present a new derivation for the optimal decay of \textit{arbitrary} higher order derivatives for $L^p$ solutions to the compressible fluid model of Korteweg type. This approach, based on Gevrey estimates, is to establish uniform bounds on the growth of the radius of analyticity of the solution in negative Besov norms. For that end, the maximal regularity property involving Gevrey multiplier of heat kernel and non standard product Besov estimates are well developed. Our approach is partly inspired by Oliver-Titi's work and is applicable to a wide range of dissipative systems.
\end{abstract}
\vskip 0.3 true cm

{\bf Keywords:} large-time behavior; Navier-Stokes-Korteweg system; critical Besov space
\vskip 0.3 true cm

{\bf Mathematical Subject Classification 2010}: 76N10, 35D05, 35Q05

\section{Introduction}\setcounter{equation}{0}
We are concerned with the compressible Navier-Stokes-Korteweg system, which is used to study the dynamics of a two-phases liquid-vapor mixture in the Diffuse Interface (DI) approach (see e.g., \cite{CR}). The theory formulation was originated from Van der Waals \cite{V}, later generalized by Korteweg \cite{K}. The basic idea is to add to the classical compressible fluids equations a \textit{capillary} term, which penalizes high variations of the density of phase transitions. The mathematical derivation of the corresponding equations is performed by Dunn and Serrin in \cite{DS}. The barotropic model reads
\begin{equation}
\left\{
\begin{array}{l}\partial_{t}\rho+\mathrm{div}(\rho u)=0,\\ [1mm]
 \partial_{t}(\rho u)+\mathrm{div}(\rho u \otimes u)+\nabla P=\mathcal{A}u+\mathrm{div}K.\\[1mm]
 \end{array} \right.\label{1.1}
\end{equation}
Here, $\rho=\rho(t,x)\in \mathbb{R}_{+}$ and $u=u(t,x)\in \mathbb{R}^{d}(d\geq2)$ are the unknown functions on $[0,+\infty)\times \mathbb{R}^{d}$, which stand for the density and velocity field of a fluid, respectively. The pressure $P=P(\rho)$ is a suitable smooth function of $\rho$. The diffusion operator
$\mathcal{A}u$ is given by $\mathcal{A}u=\mathrm{div}(2\mu D(u))+\nabla(\lambda\mathrm{div}u)$, where the Lam\'{e} coefficients $\lambda$ and $\mu$ (the \textit{bulk} and \textit{shear viscosities}) are density-dependent functions satisfying
$$\lambda>0, \nu\triangleq\lambda+2\mu>0.$$
The positive condition ensures the uniform ellipticity of $\mathcal{A}u$. The symmetric matrix $D(u)\triangleq\frac{1}{2}(\nabla u+{}^\top\!\nabla u)(\top$ transpose) stands for  the  deformation tensor, and $\nabla$ and $\div$ are the gradient and divergence operators with respect to the space variable. The Korteweg tensor is presented by (see \cite{BDDJ})
$$\mathrm{div}K=\nabla(\rho\kappa(\rho)\Delta\rho+\frac{1}{2}(\kappa(\rho)+\rho\kappa'(\rho))|\nabla\rho|^{2})-\mathrm{div}(\kappa(\rho)\nabla\rho\otimes\nabla\rho),$$
where $\nabla\rho\otimes\nabla\rho$ stands for the tensor product $(\partial_j\rho\partial_k\rho)_{jk}$ and the capillarity coefficient $\kappa>0$
may depend on $\rho$ in general. System \eqref{1.1} is supplemented with the initial conditions
\begin{equation}\label{initial}
\left(\rho,u\right)|_{t=0}=\left(\varrho _{0}(x),u_{0}(x)\right)
\end{equation}
We investigate the solution $(\rho,u)$ to the Cauchy problem \eqref{1.1}-\eqref{initial} fulfilling $\rho_0\rightarrow\rho^{\ast}$ and $u_0\rightarrow 0$ at infinity, where $\rho^{\ast}$ is a given positive constant.

Following from the spectral analysis in \cite{SX}, it is shown that the linearized system of \eqref{1.1} with zero sound speed $P'(\rho^{\ast})=0$ is purely parabolic in case that $\bar{\nu}^2\geq4\bar{\kappa}$ (see the scaled physical coefficients below). In the regime of ``small solutions", the dissipative mechanism is parallel to that of the incompressible Navier-Stokes equations. Therefore, let us first recall classical time-decay efforts for incompressible fluids.

On a periodic domain, the generic decay is exponential with a rate given by the lowest eigenvalue of the Stokes operator, see the work by Foias and Saut \cite{FS}. On the whole space $\mathbb{R}^d$, due to the lack of a positive lowest eigenvalue of the Stokes operator, the decay rates are generally algebraic rather than exponential in time. See for example previous works by Kato \cite{Ka}, Schonbek \cite{S1,S2}, Kajikiya \& Miyakawa\cite{KM}, Wiegner \cite{W} and other mathematicians who have established upper bounds on the decay rate of solutions:
\begin{eqnarray}\label{heat}\|u(t)\|_{L^2(\mathbb{R}^d)}\lesssim (1+t)^{-d/4},\end{eqnarray}
provided that initial data $u_0\in L^2\cap L^1$. A lower bound with the same order of decay can be established by Schonbek \cite{S3} if initial data lie outsider a set of functions of radially equidistributed energy. Schonbek \cite{S4} also showed the decay of higher-order norms of the solutions of Navier-Stokes equations in two dimensions. Oliver \& Titi \cite{OT} established, by applying Gevrey analyticity which was addressed by Fioas and Temam \cite{FT1,FT2}, the explicit bounds on the growth of the radius of analyticity of the solution in time, which implies the optimal decay estimates of \textit{arbitrary} higher-order derivatives in Sobolev framework. The key inequality in their paper is as follows (see also Lemma 9 in \cite{OT}):
\begin{equation}\label{R-E4}
\|\Lambda^{q}u\|_{L^2}^{2}\leq c(p,q)\tau^{p-2q}\|u\|_{L^2}\|\Lambda^{p}e^{\tau \Lambda}u\|_{L^2}\end{equation}
for $0\leq p \leq2q$ and $\tau>0$, where the pseudo-differential operator $\Lambda$ is defined by $\Lambda\triangleq \sqrt{-\Delta}$. The observation is that the radius of uniform analyticity increases like $\sqrt {t}$ as $t\rightarrow\infty$ as for solutions of the heat equation.

Inspired by \eqref{R-E4}, we shall develop an idea in Besov framework as follows
\begin{eqnarray}\label{key!}
\|\Lambda^{l}u\|_{\dot{B}^{0}_{2,1}}
\lesssim t^{-\frac{l}{2}-\frac{\sigma}{2}}\|e^{\sqrt {t} \Lambda_{1}}u\|_{\dot{B}^{-\sigma}_{2,\infty}}\,\,\,\,\mathrm{for}\,\,\,\,l>-\sigma.
\end{eqnarray}
That is, our approach consists of two steps:
\begin{itemize}
\item Transform higher-order derivative estimates into lower-order analytic estimates;
\item Establish uniform bounds on the growth of the radius of analyticity in negative Besov norms
$$\|e^{\sqrt {t} \Lambda_{1}}v\|_{\dot{B}^{-\sigma}_{2,\infty}}
\leq C\,\,\,\,\mathrm{for}\,\,\,\,t>0,$$
\end{itemize}
where $\Lambda_1$  stands for the Fourier multiplier with
symbol $|\xi|_{1}=\sum_{i=1}^{d}|\xi_{i}|.$\footnote{For technical reasons, it is more
convenient to use the  $\ell^1(\R^d)$ norm instead of the usual $\ell^2(\R^d)$ norm associated with $\Lambda$, see \cite{BBT,CDX}.} Furthermore, choosing a suitable regularity (for instance, $\sigma=d/2$) enables us to get the optimal decay estimates like \eqref{heat} for solution and its high order derivatives in the $L^2$ norm. It should point out here that the approach can be applied to a wide range of dissipative parabolic systems of Gevrey analyticity. In the following, we consider the large-time behavior of $L^p$ solutions to the Cauchy problem \eqref{1.1}-\eqref{initial} with zero sound speed.

\subsection{Previous works}
In the past decades, the Korteweg system \eqref{1.1} has been received more and more attention due to the physical importance.
The existence of strong solutions for \eqref{1.1} was known since the works by Hattori and Li \cite{HL,HL1}. Global solutions were obtained
only for initial data close enough to a stable equilibrium $(\rho^{\ast},0)$ with convex pressure profiles. Bresch, Desjardins and Lin \cite{DDL}
established the global existence of weak solutions in a periodic or strip domain. However, the uniqueness problem of weak solutions has not been solved.
A natural way of dealing with the uniqueness is to find a functional setting as large as possible in which the existence and uniqueness hold. This idea
is closely linked with the concept of scaling invariance space (critical Besov spaces), which has been successfully employed by Fujita-Kato \cite{FK}, Cannone \cite{Can} and Chemin \cite{C} for incompressible Navier-Stokes equations. Later, Bae-Biswas-Tadmor \cite{BBT} established the analyticity and decay with small data. Danchin \cite{D} has developed the idea of scaling invariance in compressible Navier-Stokes equations. Note that the fact that \eqref{1.1} is invariant by the transformation
\begin{eqnarray*}
\rho(t,x)\leadsto \rho(\ell ^2t,\ell x),\quad
u(t,x)\leadsto \ell u(\ell^2t,\ell x), \ \ \ell>0
\end{eqnarray*}
up to a change of the pressure term  $P$ into $\ell^2P$, Danchin and Desjardins \cite{DD} investigated the global well-posedness of \eqref{1.1} in critical Besov spaces for initial data close enough to stable equilibrium $(\rho^{\ast},0)$ with the stability assumption $P'(\rho^{\ast})>0$. Antonelli and Spirito \cite{AS} established the global existence of finite energy weak solutions for large initial data, where vacuum regions are allowed in the definition of weak solutions. Charve, Danchin and the second author \cite{CDX} investigated the global existence and Gevrey analyticity of \eqref{1.1} in more general critical $L^p$ framework, which exhibits Gevrey analyticity for a model of compressible fluids. Murata and Shibata \cite{MS} addressed a totally different statement on the global existence of strong solutions to \eqref{1.1} in Besov spaces, where the
maximal $L^p$-$L^q$ regularity was mainly employed. For the large-time behavior of solutions, Tan and Wang \cite{TW} deduced various optimal time-decay rates of smooth solutions and their spatial derivatives. Chikami and Kobayashi \cite{CK} studied the optimal time-decay estimates in the
$L^2$ critical Besov spaces. Recently, Kawashima, Shibata and the second author \cite{KSX1} investigated the dissipation effect of Korteweg tensor with the density-dependent capillarity and developed the $L^p$ energy methods (independent of spectral analysis), which leads to the optimal time-decay estimates of strong solutions.

We would like to mention that all above results are dedicated to the stable case $P'(\rho^{\ast})>0$. It is well-known from \cite{DS} that the Korteweg system \eqref{1.1} was deduced by using Van der Waals potential, where the pressure law is not necessary increasing. Therefore, it is interesting to investigate more physical cases $P'(\rho^{\ast})=0$ (zero sound speed) and $P'(\rho^{\ast})<0$. In those cases, the pressure term couldn't provide any dissipation. Danchin and Desjardins \cite{DD} focused on the critical case $P'(\rho^{\ast})=0$ and gave a spectral study of the corresponding linearized system, which indicates the parabolic smoothing is available in all frequency spaces. Kotschote \cite{K2} considered the initial-boundary value problem in bounded domain and proved the local existence and uniqueness of strong solutions in maximal $L^p$-regularity class. Chikami and Kobayashi \cite{CK} established the global well-posedness and decay of strong solutions in $L^2$ critical Besov space for the case of $P'(\rho^{*})=0$. For the critical case, Huang, Hong and Shi \cite{HHS} also proved the local-in-time existence of smooth solutions to \eqref{1.1}-\eqref{initial}. The global-in-time existence of smooth solutions was also established in periodic domain.

\subsection{Reformulation and main results}
In this paper, we are interested in studying the global analyticity and decay in the case of zero sound speed. Denote by $a=(\rho-\rho^{*})/\rho^{*}$ the density fluctuation and by $m=\rho u/\rho^{*}$
the scaled momentum. Also, we introduce the scaled viscosity coefficients $\bar{\mu}=\frac{\mu(\rho^{*})}{\rho^{*}}$, $\bar{\lambda}=\frac{\lambda(\rho^{*})}{\rho^{*}}$
and the scaled capillarity coefficient $\bar{\kappa}=\kappa(\rho^{*})\rho^{*}$. A simple calculation leads to the following perturbation form
\begin{equation}
\left\{
\begin{array}{l}\partial_{t}a+\mathrm{div}m=0,\\ [1mm]
 \partial_{t}m-\bar{\mathcal{A}}m-\bar{\kappa}\nabla\Delta a= g(a,m),\\[1mm]
(a,m)|_{t=0}=(a_{0},m_{0}),\\[1mm]
 \end{array} \right.\label{linearized}
\end{equation}
where
$$a_{0}=(\rho_{0}-\rho^{*})/\rho^{*}, \quad  \ m_{0}=\rho_{0} m_{0}/\rho^{*}, \quad \bar{\mathcal{A}}m\triangleq \bar{\mu}\Delta m+(\bar{\mu}+\bar{\lambda})\nabla\mathrm{div}m $$
and the nonlinear term $g(a,m)$ will be explicitly given in Section 3. For convenience of readers, we would like to summarize a recent result for the global wellposedness and analyticity in a class of hybrid Besov spaces that was introduced by Danchin \cite{D} and generalized by Haspot \cite{H2}.

Let $\dot{\Delta}_{j}$ and $\dot S_j$ be the Fourier cut-off operators (see \cite{BCD}). The Littlewood-Paley decomposition of a general tempered distribution $f\in \mathcal{S}'$ reads
\begin{equation}\label{eq:decompo}
f=\sum_{j\in\Z}\ddj f.
\end{equation}
As it holds only
modulo polynomials, it is convenient to consider ${S}'_{0}$ be the subspace of those tempered distributions such that
\begin{equation}\label{eq:Sh}
\lim_{j\rightarrow-\infty}\|\dot S_jf\|_{L^\infty}=0.
\end{equation}
Indeed, if \eqref{eq:Sh} is fulfilled then \eqref{eq:decompo} holds in $\mathcal{S}'$. Let us now turn to the definition of hybrid Besov spaces.

\begin{defn}\label{defn hybrid}
Let $s,t\in \mathbb{R}$, $p,q,r_{1},r_{2}\in [1,\infty]$. We denote $\dot{B}^{s,t}_{(p,r_{1}),(q,r_{2})}$ by the space of functions $f\in\mathcal{S}'_{0}$ equipped with norm:
\begin{eqnarray*}\|f\|_{\dot{B}^{s,t}_{(p,r_{1}),(q,r_{2})}}=\Big\{\sum_{j\geq j_{0}}2^{sjr_{1}}\|\dot{\Delta}_{j}f\|^{r_{1}}_{L^{p}}\Big\}^{\frac{1}{r_{1}}}+\Big\{\sum_{j<
j_{0}}2^{tjr_{2}}\|\dot{\Delta}_{j}f\|^{r_{2}}_{L^{q}}\Big\}^{\frac{1}{r_{2}}},
 \end{eqnarray*}
for some integer $j_{0}$. For convenience, we write $\|f\|_{\dot{B}^{s,t}_{(p,r_{1}),(q,r_{2})}}\triangleq\|f\|^{h}_{{{\dot B}^{s}}_{p,r_{1}}}+\|f\|^{\ell}_{{{\dot B}^{t}}_{q,r_{2}}}.$
\end{defn}

Moreover, one can define the hybrid Chemin-Lerner spaces
$\tilde{L}^{\rho_{1},\rho_{2}}_{T}(\dot{B}^{s,t}_{(p,r_{1}),(q,r_{2})})$ with norm:
 \begin{eqnarray*}\|f\|_{\tilde{L}^{\rho_{1},\rho_{2}}_{T}(\dot{B}^{s,t}_{(p,r_{1}),(q,r_{2})})}=\Big\{2^{sj}\|\dot{\Delta}_{j}f\|_{L^{\rho_{1}}_{T}L^{p}}\Big\}_{l^{r_{1}}_{j\geq
 j_{0}}}+\Big\{2^{tj}\|\dot{\Delta}_{j}f\|_{L^{\rho_{2}}_{T}L^{q}}\Big\}_{l^{r_{2}}_{j< j_{0}}}
 \end{eqnarray*}
for $T>0$. Similarly, we agree that $\|f\|_{\tilde{L}^{\rho_{1},\rho_{2}}_{T}(\dot{B}^{s,t}_{(p,r_{1}),(q,r_{2})})}\triangleq\|f\|^{h}_{\tilde{L}^{\rho_{1}}_{T}(\dot{B}^{s}_{p,r_{1}})}+
\|f\|^{\ell}_{\tilde{L}^{\rho_{2}}_{T}(\dot{B}^{t}_{q,r_{2}})}$. Also,
$\tilde{L}^{\rho,\rho}_{T}(\dot{B}^{s,t}_{(p,r_{1}),(q,r_{2})})=\tilde{L}^{\rho}_{T}(\dot{B}^{s,t}_{(p,r_{1}),(q,r_{2})})$. For notational simplicity, index $T$ is omitted if $T=+\infty$, changing $[0,T]$ to $[0,+\infty)$ in the definition above.

The global wellposedness and analyticity of strong solutions to the Cauchy problem \eqref{1.1}-\eqref{initial} are stated as follows, which has recently shown by \cite{SX}.
\begin{thm}\label{thm2.1}Let $\rho^{*}>0$ such that $P'(\rho^{*})=0$. Let $\bar{\nu}^2\geq4\bar{\kappa}$ and $1 \leq q\leq p\leq\min\{d, 2q\}$ with
\begin{eqnarray}\label{exist}\frac{1}{q}\leq \frac{1}{p}+\frac{1}{d}.\end{eqnarray}
There exists a positive $\eta>0$ depending on functions $\kappa, \lambda, \mu$ and $P$ and on $p, q$ and $d$ such that if $(a_{0},m_{0})\in \dot B^{\frac d{p}}_{p,1}\times\dot
B^{\frac d{p}-1}_{p,1}$, besides, $(a_0^\ell,u_0^\ell)\in{{\dot
    B}^{\frac{d}{q}-2}}_{q,\infty}\times{{\dot B}^{\frac{d}{q}-3}}_{q,\infty}$ satisfying
     \begin{eqnarray*}
\|(\nabla a_{0}, m_{0})\|_{\dot{B}^{\frac{d}{p}-1,\frac{d}{q}-3}_{(p,1),(q,\infty)}}\leq\eta,\end{eqnarray*}
     then \eqref{1.1}-\eqref{initial} admits a unique global-in-time solution $(a,m)$ in the space $E^{p,q}$ satisfying
\begin{equation}\label{bound}
\|(a,m)\|_{E^{p,q}_{T}}\lesssim \|(\nabla a_{0}, m_{0})\|_{\dot{B}^{\frac{d}{p}-1,\frac{d}{q}-3}_{(p,1)(q,\infty)}}
\end{equation}
for any $T>0$, where
\begin{equation*}
\|(a,m)\|_{E^{p,q}_{T}}\triangleq
\|(\nabla a,m)\|_{\tilde{L}^{\infty}_{T}(\dot{B}^{\frac{d}{p}-1,\frac{d}{q}-3}_{(p,1),(q,\infty)})}+\|(\nabla
a,m)\|_{\tilde{L}^{1}_{T}(\dot{B}^{\frac{d}{p}+1,\frac{d}{q}-1}_{(p,1),(q,\infty)})}.
\end{equation*}
Moreover, if those functions $\lambda, \mu, \kappa$ and $P$ are assumed to be real analytic near zero, then for $d\geq3$ and $1<q\leq p\leq\min\{d, 2q\}$ with $1/q\leq 1/p+1/d$, the solution $(a,m)$ fulfills $e^{\sqrt {c_{0}t}\Lambda_1}(a,m)\in E^{p,q}$,
where $c_{0}=c_{0}(d,\bar{\mu},\bar{\lambda},\bar{\kappa},\rho^{*})$ is some positive constant.
\end{thm}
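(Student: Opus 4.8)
The plan is to obtain Theorem~\ref{thm2.1} (which is due to \cite{SX}) by combining sharp linear estimates for the linearized operator with a standard continuation/fixed-point argument, and then to upgrade the solution to the Gevrey class by running the very same scheme on a Gevrey-weighted unknown. Writing $U=(\nabla a,m)$, the system \eqref{linearized} takes the form $\d_{t}U-LU=F(U)$ with $L$ a matrix Fourier multiplier and $F$ built from $g(a,m)$, where $F(U)$ is a finite sum of products of smooth functions of $a$ vanishing at the origin with quadratic-or-higher expressions in $(\nabla a,m)$ carrying up to three extra derivatives on $a$ and two on $m$. Since $P'(\rho^{*})=0$ and $\bar\nu^{2}\ge 4\bar\kappa$, the spectral analysis of \cite{DD,SX} shows that the eigenvalues of the symbol of $L$ are real and comparable to $-c|\xi|^{2}$ in \emph{every} frequency range: the divergence-free part of $m$ solves a heat equation with viscosity $\bar\mu$, while the compressible part of $m$ coupled with $a$ produces the roots $\tfrac{|\xi|^{2}}{2}\big(-\bar\nu\pm\sqrt{\bar\nu^{2}-4\bar\kappa}\big)$. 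Hence $L$ generates an analytic semigroup with heat-type smoothing of order two, and a Littlewood--Paley localization yields the maximal-regularity estimate
\begin{equation*}
\|U\|_{\wt L^{\infty}_{T}(\dot B^{s})}+\|U\|_{\wt L^{1}_{T}(\dot B^{s+2})}\lesssim\|U_{0}\|_{\dot B^{s}}+\|F\|_{\wt L^{1}_{T}(\dot B^{s})},
\end{equation*}
applied in each block of the hybrid norm of $E^{p,q}$ (high frequencies at regularity $\tfrac dp-1$ in $L^{p}$, low frequencies at $\tfrac dq-3$ in $L^{q}$; the capillary term is fourth-order but the dissipation remains second-order, whence the gain $+2$ and the downward-shifted low-frequency index tuned for the decay application).

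For global existence and the bound \eqref{bound}, I would insert $U$ into the corresponding Duhamel formula and estimate $g(a,m)$ in $\wt L^{1}_{T}(\dot B^{\frac dp-1,\frac dq-3}_{(p,1),(q,\infty)})$ using the standard paraproduct and remainder laws in hybrid Besov spaces together with the composition estimate for $a\mapsto\cF(a)$ with $\cF$ smooth and $\cF(0)=0$. Combined with the linear estimate this produces $\|U\|_{E^{p,q}_{T}}\le C_{0}\|U_{0}\|_{\dot B^{\frac dp-1,\frac dq-3}_{(p,1),(q,\infty)}}+C\|U\|_{E^{p,q}_{T}}^{2}$, so a continuity argument closes the a priori bound once $\eta$ is small; existence then follows from a standard iteration scheme, and uniqueness in this critical class is obtained by estimating the difference of two solutions in a space one order below, exactly as in \cite{D,CDX}.

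For the analyticity statement, assume $\lambda,\mu,\kappa,P$ real analytic near $0$, set $\Phi(t,\xi)=\sqrt{c_{0}t}\,|\xi|_{1}$ and $U_{\Phi}\triangleq e^{\Phi(t,D)}U$. As $e^{\Phi(t,D)}$ commutes with $L$, the Gevrey unknown satisfies $\d_{t}U_{\Phi}-(L+\d_{t}\Phi)U_{\Phi}=e^{\Phi(t,D)}F(U)$, whose solution operator from time $\tau$ to $t$ has symbol $e^{(t-\tau)\lambda(\xi)+\Phi(t,\xi)-\Phi(\tau,\xi)}$. Using $\sqrt t-\sqrt\tau\le\sqrt{t-\tau}$, $|\xi|_{1}\le\sqrt d\,|\xi|$ and $-\mathrm{Re}\,\lambda(\xi)\ge c|\xi|^{2}$, completing the square bounds this symbol by $e^{c_{0}d/(2c)}e^{-\frac c2(t-\tau)|\xi|^{2}}$; thus the weight $e^{\Phi(t,D)}$ together with $\d_{t}\Phi$ is absorbed into a plain heat semigroup, and the maximal-regularity estimate above holds verbatim with $U$ replaced by $U_{\Phi}$ and $F$ by $e^{\Phi(t,D)}F(U)$. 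For the nonlinearity one uses the subadditivity $|\xi|_{1}\le|\eta|_{1}+|\xi-\eta|_{1}$, i.e. the pointwise bound $e^{\Phi(t,\xi)}\le e^{\Phi(t,\eta)}e^{\Phi(t,\xi-\eta)}$, to transfer the product laws and the power-series expansion of $\cF$ to the weighted unknowns, obtaining Gevrey counterparts of all the estimates above (the stronger hypotheses $d\ge 3$ and $1<q\le p\le\min\{d,2q\}$ are used precisely here, to keep the product and composition constants summable). Rerunning the continuity argument for $U_{\Phi}$, now additionally with $c_{0}$ small, gives $\|U_{\Phi}\|_{E^{p,q}}\lesssim\|U_{0}\|$, that is $e^{\sqrt{c_{0}t}\Lambda_{1}}(a,m)\in E^{p,q}$.

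The main obstacle is the set of ``non-standard'' Gevrey product and composition estimates in the \emph{hybrid} spaces compatible with the fourth-order capillary structure: the $\ell^{1}$ Fourier weight is indispensable because it is subadditive, but one must still control how the high/low cutoff at $j_{0}$ interacts with the paraproduct decomposition and with the weighted remainder terms, and one must bound $e^{\Phi(t,D)}\cF(a)$ by a \emph{convergent} series in $\|e^{\Phi(t,D)}a\|$ whose constants do not grow too fast with the number of factors --- this is where real analyticity of $\cF$ enters and where the index restrictions become sharp. By contrast the unweighted global wellposedness and uniqueness are by now routine in the Danchin-type critical framework, and the Gevrey maximal-regularity step, though it looks delicate because $L$ is non-selfadjoint and perturbed by a fourth-order term, reduces cleanly to the square-completion above.
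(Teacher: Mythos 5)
The first thing to note is that this paper does not actually prove Theorem~\ref{thm2.1}: it is quoted from the companion work \cite{SX} (see the sentence immediately preceding the statement), so there is no in-paper proof to compare against line by line. Your outline is nevertheless the standard route of \cite{SX,CDX}, and it is consistent with the machinery this paper develops in Sections~2--3 for the decay application: the diagonalization you infer from the real roots $\tfrac{|\xi|^{2}}{2}\big(-\bar\nu\pm\sqrt{\bar\nu^{2}-4\bar\kappa}\big)$ is implemented there through the effective velocity $w=\cQ m+\alpha\nabla a$ with $\alpha=\tfrac12(\bar\nu\pm\sqrt{\bar\nu^{2}-4\bar\kappa})$, which decouples the $(a,\cQ m)$ block into scalar heat equations, and your ``$\sqrt t-\sqrt\tau\le\sqrt{t-\tau}$ plus square completion'' step is exactly the combination of Lemmas~\ref{lem3.1}--\ref{lem3.2} used in the proof of Lemma~\ref{lem4.2}. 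Two points deserve more care than your sketch gives them. First, for $p\neq 2$ the pointwise symbol bound $e^{-cs|\xi|^{2}+\sqrt{c_{0}s}\,|\xi|_{1}}\le Ce^{-\frac{c}{2}s|\xi|^{2}}$ does \emph{not} by itself yield $L^{p}\to L^{p}$ boundedness, since $|\xi|_{1}$ is non-smooth across the coordinate hyperplanes; one needs the one-dimensional tensor-product multiplier argument of \cite{BBT} (Lemma~\ref{lem3.2} here, and the operators $K_{\alpha}$, $Z_{t,\alpha,\beta}$ for the bilinear term $\mathcal{B}_{t}$), and this is also the true source of the restriction $1<q\le p<\infty$ in the analyticity statement (Riesz-transform-type multipliers fail on $L^{1}$ and $L^{\infty}$), rather than summability of composition constants. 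Second, the nonlinearity contains terms such as $\nabla(\tilde\kappa_{1}(a)\Delta a)$ whose derivative count on $a$ exhausts the parabolic gain exactly, so the product and composition laws must be run with the precise mixed indices ($L^{p}$ in high frequencies, $L^{q}$ in low frequencies); this is where the hypotheses $p\le 2q$ and $1/q\le 1/p+1/d$ are actually consumed, as in Lemma~\ref{d1}. With those caveats, your plan is sound as a strategy, though as written it is an outline rather than a complete proof.
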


Theorem \ref{thm2.1} indicates that the Korteweg system \eqref{1.1} is \textit{purely parabolic} in the case of $\bar{\nu}^2\geq4\bar{\kappa}$ and acoustic waves are not available. Consequently, the usual $L^2$ type bounds on the low frequencies of solutions are improved to the $L^p$ framework in contrast to the priori study of compressible Navier-Stokes equations (\cite{CD,CMZ,D,H2}) or compressible Navier-Stokes-Korteweg equations (\cite{CDX,CK,DD}). Also, System \eqref{1.1} exhibits the Gevrey analyticity, where the radius of uniform analyticity increases like $\sqrt {t}$ as $t\rightarrow\infty$. Stemming from the simple idea \eqref{key!}, we are able to develop a new decay framework involving Gevrey smoothing estimates, which leads to the optimal time-decay of the solution and its arbitrary higher-order derivatives. Now we state the main result of this paper.

\begin{thm}\label{thm2.3}
Let $(a,m)$ be the global solution to \eqref{linearized} addressed by Theorem \ref{thm2.1}. Suppose that the
real number $\sigma_1$ fulfills $2-\frac{d}{q}\leq\sigma_{1}<d-\frac{d}{q}$, if $1<p\leq2$ and $2-\frac{d}{q}\leq\sigma_{1}\leq \frac{2d}{p}-\frac{d}{q}$, if $p>2$.
If in addition initial norm $ \|(\nabla a_0,m_0)\|^{\ell}_{\dot{B}^{-\sigma_{1}-1}_{q,\infty}}$ is bounded,
then the solution $(a,u)$ satisfies the following decay estimates
\begin{eqnarray}\label{bound decay1}\|\Lambda^{l}a\|_{L^{r}}\leq C \langle t-t_0\rangle^{-\frac{\tilde{\sigma}_{1}}{2}-\frac{l}{2}}, \quad l>-\tilde{\sigma}_{1};
\end{eqnarray}
\begin{eqnarray}\label{bound decay2}
\|\Lambda^{l}m\|_{L^{r}}\leq C \langle t-t_0\rangle^{-\frac{\tilde{\sigma}_{1}}{2}-\frac{1}{2}-\frac{l}{2}}, \quad l>-\tilde{\sigma}_{1}-1,\end{eqnarray}
for all $t\geq t_0$ and $r\geq p$, where $t_{0}>0$ is some certain transient (sufficiently small) time, $\tilde{\sigma}_{1}\triangleq\sigma_{1}-\frac{d}{r}+\frac{d}{q}$   and $\langle t\rangle\triangleq\sqrt{1+t^2}$.
\end{thm}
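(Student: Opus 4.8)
The plan is to combine the global Gevrey bound from Theorem \ref{thm2.1} with the interpolation inequality \eqref{key!} and the usual Besov embeddings. First I would set up the low-frequency part. From Theorem \ref{thm2.1}, the solution satisfies $e^{\sqrt{c_0 t}\Lambda_1}(\nabla a,m)\in E^{p,q}$; in particular the low-frequency piece $e^{\sqrt{c_0 t}\Lambda_1}(\nabla a,m)^{\ell}$ is bounded in $\tilde L^\infty(\dot B^{d/q-3}_{q,\infty})$ (and in the higher space with an integrable-in-time weight). The first substantive step is to propagate the \emph{negative} Besov regularity: assuming $\|(\nabla a_0,m_0)\|^{\ell}_{\dot B^{-\sigma_1-1}_{q,\infty}}$ is finite, I would run a Duhamel/energy argument on \eqref{linearized} in the space $\tilde L^\infty(\dot B^{-\sigma_1-1}_{q,\infty})^{\ell}$, using the parabolic smoothing of the linearized Korteweg semigroup together with the product estimates for $g(a,m)$ already available from the $E^{p,q}$ bound, to show $\|e^{\sqrt{c_0 t}\Lambda_1}(\nabla a,m)\|^{\ell}_{\dot B^{-\sigma_1-1}_{q,\infty}}\le C$ uniformly in $t$; this is exactly the ``uniform bound on the growth of the radius of analyticity in negative Besov norms'' flagged in the introduction. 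Here the transient time $t_0>0$ enters: one first absorbs the short-time interval $[0,t_0]$ where $\sqrt{c_0 t}$ is too small to be useful, and works with $e^{\sqrt{c_0(t-t_0)}\Lambda_1}$ thereafter.

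The second step is the transfer from low-order analytic control to higher-order derivative decay. Using the Besov analogue \eqref{key!}, namely $\|\Lambda^l f\|_{\dot B^0_{q,1}}\lesssim (t-t_0)^{-l/2-\sigma/2}\|e^{\sqrt{c_0(t-t_0)}\Lambda_1}f\|_{\dot B^{-\sigma}_{q,\infty}}$ valid for $l>-\sigma$, applied with $f=\nabla a$ (resp. $f=m$), $\sigma=\sigma_1$ (resp. $\sigma=\sigma_1+1$), I get decay of $\|\Lambda^{l}(\nabla a,m)\|_{\dot B^0_{q,1}}$ at the rates $(t-t_0)^{-\sigma_1/2-l/2}$ and $(t-t_0)^{-\sigma_1/2-1/2-l/2}$ for the low-frequency block. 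For the high-frequency block I would instead use the $E^{p,q}$ bound directly: high frequencies already enjoy exponential-type decay from $e^{\sqrt{c_0 t}\Lambda_1}$, so $\|\Lambda^l(\nabla a,m)\|^{h}$ decays faster than any of the claimed algebraic rates. Then I convert to $L^r$: for $r\ge p\ge q$, the embedding $\dot B^0_{q,1}\hookrightarrow \dot B^{d/q-d/r}_{r,1}\hookrightarrow L^r$ (with the standard loss $d/q-d/r$ in regularity, paid for by extra derivatives, which is where the shift $\tilde\sigma_1=\sigma_1-d/r+d/q$ comes from) gives $\|\Lambda^l a\|_{L^r}\lesssim \|\Lambda^{l+d/q-d/r}\nabla a\|_{\dot B^0_{q,1}}$ up to constants after also combining the a priori bound $\|a(t)\|_{L^r}\lesssim 1$ to handle $l$ near the endpoint. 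Splicing low and high frequencies and replacing $(t-t_0)$ by $\langle t-t_0\rangle$ (using the uniform-in-time $E^{p,q}$ and negative-norm bounds for $t$ near $t_0$) yields \eqref{bound decay1}--\eqref{bound decay2}.

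The constraints on $\sigma_1$ are exactly what make the two steps compatible: the lower bound $\sigma_1\ge 2-d/q$ is needed so that $\dot B^{-\sigma_1-1}_{q,\infty}$ sits below the solution space $\dot B^{d/q-3}_{q,\infty}$ and the product/Duhamel estimates for $g(a,m)$ close (i.e. the nonlinearity does not destroy the negative regularity), while the upper bounds ($\sigma_1<d-d/q$ if $1<p\le2$, or $\sigma_1\le 2d/p-d/q$ if $p>2$) guarantee that the $L^r$ embedding and the product estimates remain in the admissible range of the critical $L^p$ theory, and in particular that $\tilde\sigma_1$ stays in a range where the interpolation exponent $l>-\tilde\sigma_1$ is non-vacuous for $l\ge0$.

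\textbf{Main obstacle.} I expect the crux to be the first step: establishing the \emph{uniform-in-time} Gevrey bound in the negative Besov norm $\tilde L^\infty(\dot B^{-\sigma_1-1}_{q,\infty})^{\ell}$. Propagating negative regularity for a quasilinear system is delicate because the nonlinear term $g(a,m)$ — which contains the Korteweg third-order term and the convective and pressure-type contributions — must be estimated in $\dot B^{-\sigma_1-1}_{q,\infty}$ using only the norms controlled by $E^{p,q}$, and this requires ``non standard'' product Besov estimates (as advertised in the abstract) where one factor lives in a negative space and the Gevrey multiplier $e^{\sqrt{c_0 t}\Lambda_1}$ must be passed through the product via a Gevrey-type analyticity inequality (an exponential-weighted version of Bony's paraproduct decomposition, controlling $e^{\sqrt{t}\Lambda_1}(fg)$ by products of $e^{\sqrt{t}\Lambda_1}f$ and $e^{\sqrt{t}\Lambda_1}g$). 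Getting a \emph{time-independent} constant out of the resulting Duhamel estimate — rather than one that grows polynomially in $t$ — is the real content, and it is what ultimately forces the smallness assumption on the critical data and the precise admissible window for $\sigma_1$.
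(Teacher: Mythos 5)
Your proposal is correct and follows essentially the same route as the paper: establish the uniform-in-time Gevrey bound $\sup_t\|e^{\sqrt{c_0t}\Lambda_1}(\nabla a,m)\|^{\ell}_{\dot B^{-\sigma_1-1}_{q,\infty}}\le C$ via Duhamel plus non-standard Gevrey product estimates, then transfer it to higher-order derivative decay through the Besov version of the Oliver--Titi inequality, with exponential decay for high frequencies and Besov embedding into $L^r$. The only implementation detail you leave implicit is how the paper realizes the ``parabolic smoothing of the linearized Korteweg semigroup'': it diagonalizes the $(a,\mathcal Q m)$ coupling with the effective velocity $w=\mathcal Q m+\alpha\nabla a$, $\alpha=\frac12(\bar\nu\pm\sqrt{\bar\nu^2-4\bar\kappa})$, reducing everything to scalar heat equations to which the Gevrey maximal-regularity lemma applies.
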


\begin{rem}
Theorem \ref{thm2.3} exhibits the optimal long-time asymptotic description of solution and its arbitrary higher-order derivatives. Indeed, by choosing $\sigma_{1}=d-\frac{d}{q}-1$ ($L^1\hookrightarrow\dot{B}^{-\sigma_{1}-1}_{q,\infty}$), we arrive at
$$
\|\Lambda^{l}a\|_{L^{r}}\lesssim  \langle t\rangle^{-\frac{d}{2}(1-\frac{1}{r})+\frac{1}{2}-\frac{l}{2}}, \quad l>-\frac{d}{r'}+1;
\quad \|\Lambda^{l}m\|_{L^{r}}\lesssim  \langle t\rangle^{-\frac{d}{2}(1-\frac{1}{r})-\frac{l}{2}}, \quad l>-\frac{d}{r'}
$$
for $1<  r\leq \infty$ ($1/r+1/r'=1$). Owing to the absence of dissipation from the pressure in the case of zero sound speed, we see that the density decays
at a slower time-rate than the velocity. Those decay rates for $1<r<2$ are totally new, which provide a hint for long-time behaviors for compressible fluids.
\end{rem}

\begin{rem}
The proof of Theorem \ref{thm2.3} actually  presents a derivation for the upper bounds for the decay of higher-order derivatives of solutions. The key estimate lies in the uniform bounds on the growth of the radius of analyticity in Besov spaces of negative order and thus is different in comparison with the recent energy
methods, see \cite{DX,KSX1,XX,X}. The argument is of independent interest in the critical setting, which may be applicable to a wide range of dissipative
systems of ``\textit{regularity-gain type}" \cite{KSX2}.
\end{rem}

We would like to present some illustrations on the proof of Theorem \ref{thm2.3}. As analyzed in \eqref{key!},
firstly, we utilize Lemma \ref{lem5.2} to transform those estimates of higher order derivatives into lower order analyticity estimates, where
the radius of analyticity grows like $\sqrt{t}$ as $t\rightarrow\infty$. The next step is to establish uniform bounds on the growth of the radius
in the Besov space $\dot{B}^{-\sigma_{1}}_{q,\infty}$ (see \eqref{evoo}), which is actually the consequence of Proposition \ref{tol}. In order to prove
Proposition \ref{tol}, the $L^p$ energy method in terms of effective velocity that was first used in the critical framework by Haspot \cite{H2} is mainly employed. In the Korteweg case, the effective velocity is given by
 $$w\triangleq \cQ m+\alpha \nabla a\ \ \Big(\alpha=\frac{1}{2}(\bar{\nu}\pm\sqrt{\bar{\nu}^2-4\bar{\kappa}})\Big),$$
which allows to eliminate the coupling between $a$ and  $\mathcal{Q}m$ (the compressible part of momentum). The linear analysis
depends on the maximal regularity property involving Gevrey multiplier of heat kernel (see Lemma \ref{lem4.2}) and the nonlinear estimates resort to non standard product Besov estimates (see Lemma \ref{d1}). Finally, various Sobolev embeddings and interpolation enables us to establish the evolution of Gevrey regularity in Besov spaces restricted in low frequencies.

The rest of this paper unfolds as follows. Section 2 is devoted to Fourier multiplier lemmas, the maximal regularity involving Gevrey multiplier of heat kernel and nonlinear Gevrey estimates for product and composite.
Section 3 is the main part, which is dedicated to uniform bounds on the growth of the radius of analyticity. The proof of Theorem \ref{thm2.3} will be presented in the final Section 4.

\section{Preliminary}\setcounter{equation}{0}\label{sec:4}
Throughout the paper, $C>0$ stands for a harmless ``constant". For brevity, $f\lesssim g$ means that $f\leq Cg$. It will also be understood that $\|(f,g)\|_{X}=\|f\|_{X}+\|g\|_{X}$ for all $f,g\in X$. Moreover, for $ 1\leq p\leq \infty$, we denote by $L^{p}=L^{p}(\mathbb{R}^{d})$ the Lebesgue space on $\mathbb{R}^{d}$ with the norm $\|\cdot\|_{L^{p}}$. The following Fourier multiplier lemmas have been proved in \cite{BBT}.
\begin{lem}\label{lem3.1} Consider the operator $F:=e^{-(\sqrt{t-s}+\sqrt s-\sqrt t)\Lambda_1}$ for $0\leq s\leq t$. Then $F$ is either the identity operator or is an $L^1$ kernel whose
$L^1$ norm is bounded independent of $s,t$.
\end{lem}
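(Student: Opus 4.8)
The plan is to analyze the exponent $\phi(s,t) := \sqrt{t-s} + \sqrt{s} - \sqrt{t}$ for $0 \le s \le t$ and show that the operator $F = e^{-\phi(s,t)\Lambda_1}$ is either the identity (when $\phi = 0$) or has a symbol of the form $e^{-\beta|\xi|_1}$ with $\beta = \phi(s,t) > 0$, which is the Fourier transform of an $L^1$ function whose $L^1$ norm does \emph{not} depend on $\beta$. First I would record the elementary fact that $\phi(s,t) \ge 0$: by concavity of $\sqrt{\cdot}$ (or by squaring $\sqrt{t-s}+\sqrt{s} \ge \sqrt{t}$, which reduces to $2\sqrt{s(t-s)} \ge 0$), one has $\sqrt{t-s}+\sqrt{s} \ge \sqrt{t}$, with equality exactly when $s = 0$ or $s = t$. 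Hence $\beta := \phi(s,t) \ge 0$ always, and $F = \mathrm{Id}$ precisely in the degenerate endpoint cases $s \in \{0,t\}$; otherwise $\beta > 0$.

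The core of the argument is then to see that for any $\beta > 0$, the Fourier multiplier with symbol $e^{-\beta|\xi|_1} = \prod_{i=1}^d e^{-\beta|\xi_i|}$ acts as convolution with an $L^1$ kernel of $L^1$-norm independent of $\beta$. Because the symbol factorizes over coordinates, the kernel is the tensor product $K_\beta(x) = \prod_{i=1}^d k_\beta(x_i)$, where $k_\beta$ is the one-dimensional inverse Fourier transform of $e^{-\beta|\cdot|}$, namely the Poisson kernel $k_\beta(y) = \frac{1}{\pi}\frac{\beta}{\beta^2 + y^2}$ (up to normalization conventions). Thus $\|K_\beta\|_{L^1(\R^d)} = \prod_{i=1}^d \|k_\beta\|_{L^1(\R)} = \|k_1\|_{L^1(\R)}^d$ by the scaling $k_\beta(y) = \beta^{-1}k_1(y/\beta)$, and $\|k_1\|_{L^1(\R)} = 1$; in particular the bound is independent of $\beta$, hence of $s$ and $t$. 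Consequently $\|F g\|_{L^1} \le \|K_\beta\|_{L^1}\|g\|_{L^1} = \|g\|_{L^1}$ and, more generally, $F$ maps $L^r$ to itself with norm $\le 1$ for every $r \in [1,\infty]$ by Young's inequality — which is the form in which the lemma will be used in later sections.

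I do not anticipate a genuine obstacle here: the only mildly delicate point is keeping the conventions for the Fourier transform and the multiplier $\Lambda_1$ consistent with the normalization used in the companion references \cite{BBT,CDX}, so that $e^{-\beta\Lambda_1}$ really does correspond to convolution with the (suitably normalized) multidimensional Poisson kernel rather than a dilate of it. Once that bookkeeping is fixed, the nonnegativity of $\phi$ and the product structure of the symbol do all the work, and the $\beta$-independence of $\|K_\beta\|_{L^1}$ follows purely from scaling. Since this is quoted from \cite{BBT}, in the paper itself the statement can simply be cited; the sketch above is the self-contained justification.
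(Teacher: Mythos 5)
Your argument is correct and is precisely the standard one: the paper itself gives no proof of this lemma (it is quoted from Bae--Biswas--Tadmor \cite{BBT}), and the \cite{BBT} proof is exactly your computation — nonnegativity of $\sqrt{t-s}+\sqrt{s}-\sqrt{t}$ by squaring, with equality only at $s=0$ or $s=t$, and otherwise the symbol $e^{-\beta|\xi|_1}$ factorizes into one-dimensional Poisson kernels whose tensor product has $L^1$ norm equal to $1$ by scaling, independently of $\beta$. Nothing is missing.
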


\begin{lem}\label{lem3.2} The operator $F=e^{\frac{1}{2}a\Delta+{\sqrt a}\Lambda_1}$ is a Fourier multiplier which maps boundedly $L^p\to L^p$, $1< p<\infty$, and its operator norm is uniformly bounded with respect to $a\geq0$.
\end{lem}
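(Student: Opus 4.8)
The plan is to strip the parameter $a$ off the symbol by dilation invariance, and then to reduce the $d$-dimensional symbol to a tensor product of elementary one-variable multipliers. Write $m_a(\xi)=e^{-\frac12 a|\xi|^2+\sqrt a\,|\xi|_1}$ for the symbol of $F$. For $a=0$ we have $m_0\equiv1$, the identity operator of norm $1$, so assume $a>0$. Since $m_a(\xi)=m_1(\sqrt a\,\xi)$, and the multiplier operator with symbol $m_1(\lambda\,\cdot)$ equals $D_{\lambda^{-1}}\circ(\mathrm{Op}\,m_1)\circ D_\lambda$ with $D_\lambda f=f(\lambda\,\cdot)$, while $\|D_\lambda\|_{L^p\to L^p}\,\|D_{\lambda^{-1}}\|_{L^p\to L^p}=\lambda^{-d/p}\cdot\lambda^{d/p}=1$, the $L^p(\R^d)$ operator norm of $F$ is independent of $a>0$ and equals the multiplier norm of $\mathrm{Op}\,m_1$. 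Hence it suffices to bound the latter, and every constant produced afterwards is automatically $a$-independent.

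Next I would use the splitting $-\frac12|\xi|^2+|\xi|_1=\sum_{j=1}^d\big(-\tfrac12\xi_j^2+|\xi_j|\big)$, which gives the factorization $m_1(\xi)=\prod_{j=1}^d\phi(\xi_j)$ with $\phi(s)\triangleq e^{-\frac12 s^2+|s|}$. By the standard tensorisation principle for Fourier multipliers --- apply the one-variable estimate successively in each coordinate, freezing the remaining variables and using Fubini --- one gets $\|F\|_{L^p(\R^d)\to L^p(\R^d)}\le\big(\|\phi\|_{M_p(\R)}\big)^{d}$, where $\|\phi\|_{M_p(\R)}$ denotes the $L^p(\R)$ multiplier norm of $\phi$. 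Everything is therefore reduced to the one-dimensional claim: $\phi$ is an $L^p(\R)$ multiplier for $1<p<\infty$.

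For this one-dimensional claim I see two natural routes. Either invoke the Mikhlin--Hörmander theorem on $\R$: $\phi$ is bounded (its maximum is $e^{1/2}$, attained at $|s|=1$), smooth away from the single point $s=0$, with $\phi'(s)=(\mathrm{sgn}\,s-s)\,e^{-\frac12 s^2+|s|}$, so that $\sup_{s\ne0}|s\,\phi'(s)|<\infty$, and the failure of smoothness on the null set $\{0\}$ does no harm. Or, more elementarily, check that $g\triangleq\mathcal F^{-1}\phi\in L^1(\R)$: $\phi$ is continuous with only a corner (a jump of $\phi'$) at the origin and decays like a Gaussian at infinity, so an integration by parts twice gives $|g(x)|\lesssim(1+x^2)^{-1}$; this even shows $\phi$ is a multiplier on every $L^p$, $1\le p\le\infty$, with norm $\le\|g\|_{L^1}$, whence $\sup_{a\ge0}\|F\|_{L^p\to L^p}\le\max\{1,\|g\|_{L^1}^{d}\}<\infty$. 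The only genuinely delicate point is the non-smoothness of $|\xi|_1$ on the coordinate hyperplanes, equivalently the corner of $\phi$ at the origin; the coordinatewise factorization is precisely what confines this singularity to an isolated point in each variable, and once that is done the Gaussian factor renders the decay estimates routine.
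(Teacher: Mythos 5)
Your proof is correct, but note first that the paper does not actually prove this lemma: it is quoted from Bae--Biswas--Tadmor \cite{BBT}, and the machinery displayed right after it (the half-line projections $K_{\pm1}$ and the operators $L_{a,\pm1}$, $Z_{t,\alpha,\beta}$) reveals the route taken there. That argument shares your first two reductions --- dilation invariance to remove $a$, and the coordinatewise factorization $e^{-\frac12|\xi|^2+|\xi|_1}=\prod_j e^{-\frac12\xi_j^2+|\xi_j|}$ --- but then splits each one-variable symbol along the half-lines, $e^{-\frac12 s^2+|s|}=e^{-\frac12(s-1)^2+\frac12}\chi_{s>0}+e^{-\frac12(s+1)^2+\frac12}\chi_{s<0}$, so that each piece is a pure shifted Gaussian (an explicit $L^1$ kernel) composed with a Riesz projection $K_{\pm1}$; the projections are what force the restriction $1<p<\infty$, and they are needed anyway for the bilinear operators $\mathcal{B}_t$ and Lemma~\ref{lem5.3}. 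You instead keep the corner function $\phi(s)=e^{-\frac12 s^2+|s|}$ intact and treat it either by Mikhlin (the singular point $s=0$ being exactly the excluded origin, so $\sup|\phi|+\sup_{s\neq0}|s\phi'(s)|<\infty$ suffices) or by checking $\mathcal F^{-1}\phi\in L^1$ directly; in the latter computation the two integrations by parts pick up a bounded contribution $2\widehat{\delta_0}$ from the jump of $\phi'$ at the origin, which you should acknowledge, but it does not affect the bound $|g(x)|\lesssim(1+x^2)^{-1}$. Your kernel route is in fact slightly stronger than the stated lemma, giving boundedness on $L^p$ for all $1\le p\le\infty$ with norm $\|g\|_{L^1(\R)}^d$, uniformly in $a\ge0$; this is consistent with Lemma~\ref{lem3.1}, where the analogous operator without the half-line splitting is likewise an $L^1$ kernel. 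The trade-off is that the paper's decomposition is the one that generalizes to the trilinear symbol $e^{\sqrt{c_0t}(|\xi+\eta|_1-|\xi|_1-|\eta|_1)}$, where the half-line projections cannot be avoided.
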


Indeed, Lemma \ref{lem3.2} holds true for the operator $F=e^{a\Delta+m{\sqrt a}\Lambda_1}$ with $m\in \mathbb{R}$. Proving the Gevrey regularity of solutions will be based on continuity results for the family
$(\mathcal{B}_{t})_{t\geq 0}$ of bilinear operators, which are defined by
\begin{eqnarray*}
\mathcal{B}_{t}(f,g)(t,x)&=&e^{\sqrt {c_{0}t}\Lambda_1}(e^{-\sqrt {c_{0}t}\Lambda_1}fe^{-\sqrt {c_{0}t}\Lambda_1}g)(x)
\nonumber\\&=& \frac{1}{(2\pi)^{2d}}\int_{\mathbb{R}^d}\int_{\mathbb{R}^d}e^{ix\cdot(\xi+\eta)}e^{\sqrt {c_{0}t}(|\xi+\eta|_{1}-|\xi|_{1}-|\eta|_{1})}\widehat f(\xi)\widehat g(\eta)d\xi d\eta
\end{eqnarray*}
for some $c_0>0$. From \cite{BBT,L-cras}, one can introduce the following operators acting on functions depending
on one real variable:
$$K_{1}f=\frac{1}{2\pi}\int_{0}^{\infty}e^{ix\xi}\widehat f(\xi)d\xi,$$
$$K_{-1}f=\frac{1}{2\pi}\int_{-\infty}^{0}e^{ix\xi}\widehat f(\xi)d\xi,$$
and define $L_{a,1}$ and $L_{a,-1}$ as follows:
$$L_{a,1}f=f\quad \mbox{and}\quad L_{a,-1}f=\frac{1}{2\pi}\int_{\mathbb{R}^d}e^{ix\xi}e^{-2a|\xi|}\widehat f(\xi)d\xi.$$
Set
$$Z_{t,\alpha,\beta}=K_{\beta_{1}}L_{\sqrt{c_{0}t},\alpha_{1}\beta_{1}}\otimes...\otimes K_{\beta_{d}}L_{\sqrt{c_{0}t},\alpha_{d}\beta_{d}}
\quad \mbox{and}\quad K_{\alpha}=K_{\alpha_{1}}\otimes ...\otimes K_{\alpha_{d}}$$
for $t\geq 0$, $\alpha=(\alpha_{1}, ... ,\alpha_{d})$ and $\beta=(\beta_{1}, ...,\beta_{d})\in \lbrace-1,1\rbrace^d$.
Then it follows that
$$\mathcal{B}_{t}(f,g)=\sum_{(\alpha,\beta,\gamma)\in({\{-1,1\}^{d})}^3}K_{\alpha}(Z_{t,\alpha,\beta}fZ_{t,\alpha,\gamma}g).$$
It is not difficult to see that $K_{\alpha},Z_{t,\alpha, \beta}$ are linear combinations of smooth homogeneous of
degree zero Fourier multipliers, which are bounded on $L^p$ for $1<p<\infty$. Consequently,
\begin{lem}\label{lem5.3} (\cite{BBT})
For any $1<p,p_{1},p_{2}<\infty$ with $\frac{1}{p}=\frac{1}{p_{1}}+\frac{1}{p_{2}}$, we have for some constant $C$
independent of $t\geq0$,
$$\|\mathcal{B}_{t}(f,g)\|_{L^p}\leq C\|f\|_{L^{p_{1}}}\|g\|_{L^{p_{2}}}.$$
\end{lem}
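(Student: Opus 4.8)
The plan is to exploit the explicit decomposition
$$\mathcal{B}_{t}(f,g)=\sum_{(\alpha,\beta,\gamma)\in(\{-1,1\}^{d})^{3}}K_{\alpha}\big(Z_{t,\alpha,\beta}f\cdot Z_{t,\alpha,\gamma}g\big)$$
recorded just above: this is a \emph{finite} sum of $8^{d}$ terms, the number of which depends only on $d$, so it suffices to bound each term separately by $C\|f\|_{L^{p_{1}}}\|g\|_{L^{p_{2}}}$ with $C$ independent of $t\geq0$. For a fixed triple $(\alpha,\beta,\gamma)$, I would first apply H\"older's inequality with $\frac1p=\frac1{p_{1}}+\frac1{p_{2}}$ to get
$$\big\|K_{\alpha}(Z_{t,\alpha,\beta}f\cdot Z_{t,\alpha,\gamma}g)\big\|_{L^{p}}\leq \|K_{\alpha}\|_{L^{p}\to L^{p}}\,\|Z_{t,\alpha,\beta}f\|_{L^{p_{1}}}\,\|Z_{t,\alpha,\gamma}g\|_{L^{p_{2}}},$$
so the whole matter reduces to uniform-in-$t$ operator bounds for $K_{\alpha}$ on $L^{p}$ and for $Z_{t,\alpha,\beta}$, $Z_{t,\alpha,\gamma}$ on $L^{p_{1}}$, $L^{p_{2}}$ respectively.

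For $K_{\alpha}=K_{\alpha_{1}}\otimes\cdots\otimes K_{\alpha_{d}}$, each one-dimensional factor $K_{\pm1}$ is the Fourier projection onto $\{\pm\xi_{i}>0\}$, equal to $\frac12(I\pm iH)$ with $H$ the Hilbert transform, hence bounded on $L^{p}(\mathbb{R})$ for $1<p<\infty$; since a tensor product of operators bounded on $L^{p}(\mathbb{R})$ is bounded on $L^{p}(\mathbb{R}^{d})$ with norm at most the product of the one-dimensional norms, this gives $\|K_{\alpha}\|_{L^{p}\to L^{p}}\leq C(p,d)$, independent of $t$. The same scheme handles $Z_{t,\alpha,\beta}=(K_{\beta_{1}}L_{\sqrt{c_{0}t},\alpha_{1}\beta_{1}})\otimes\cdots\otimes(K_{\beta_{d}}L_{\sqrt{c_{0}t},\alpha_{d}\beta_{d}})$ coordinate by coordinate: $K_{\beta_{i}}$ is bounded on $L^{p_{1}}(\mathbb{R})$ as before, while $L_{\sqrt{c_{0}t},\alpha_{i}\beta_{i}}$ is either the identity or convolution with the one-dimensional Poisson kernel of scale $\sqrt{c_{0}t}$, a nonnegative kernel of unit mass, hence a contraction on every $L^{p_{1}}(\mathbb{R})$ uniformly in $t$. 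Multiplying the coordinatewise bounds yields $\|Z_{t,\alpha,\beta}\|_{L^{p_{1}}\to L^{p_{1}}}\leq C(p_{1},d)$ and likewise $\|Z_{t,\alpha,\gamma}\|_{L^{p_{2}}\to L^{p_{2}}}\leq C(p_{2},d)$, both uniform in $t\geq0$.

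Summing the $8^{d}$ terms then gives $\|\mathcal{B}_{t}(f,g)\|_{L^{p}}\leq C(p,p_{1},p_{2},d)\|f\|_{L^{p_{1}}}\|g\|_{L^{p_{2}}}$ with $C$ independent of $t$, as required. The one point that deserves care --- and the only place where the ``homogeneous of degree zero'' description is not literally accurate --- is the factor $e^{-2\sqrt{c_{0}t}|\xi_{i}|}$ hidden in $L_{\sqrt{c_{0}t},-1}$: it cannot be treated as a Mikhlin multiplier, but realizing it as convolution against the Poisson kernel makes its $L^{p_{1}}$-operator norm exactly $1$ regardless of the scale $\sqrt{c_{0}t}$, which is precisely what produces a constant independent of $t$. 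Everything else is the classical $L^{p}$-boundedness of the Hilbert transform together with the stability of $L^{p}$-boundedness under tensor products.
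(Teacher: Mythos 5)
Your proof is correct and follows the same route the paper (and \cite{BBT}) takes: the finite decomposition of $\mathcal{B}_{t}$ into the $8^{d}$ terms $K_{\alpha}(Z_{t,\alpha,\beta}f\,Z_{t,\alpha,\gamma}g)$, H\"older's inequality, and uniform-in-$t$ $L^{p}$-bounds for the tensor-product multipliers via the Hilbert transform. Your observation that the factor $e^{-2\sqrt{c_{0}t}|\xi_{i}|}$ in $L_{\sqrt{c_{0}t},-1}$ is not a homogeneous degree-zero symbol but rather a Poisson-kernel convolution of unit mass (hence an $L^{p}$-contraction uniformly in $t$) is a welcome precision over the paper's one-line justification.
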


Fixing some real number $c_0>0$, denote $V(t)\triangleq e^{\sqrt {c_{0}t}\Lambda_1}v$, $A(t)\triangleq e^{\sqrt {c_{0}t}\Lambda_1}a$
and $B(t)\triangleq e^{\sqrt {c_{0}t}\Lambda_1}b$ for $t\geq0$ (dependence on $t$  will be often omitted). We first present the analytic property for the heat equation.
\begin{lem}\label{lem4.2}
Let $T>0$, $\sigma\in \mathbb{R}, 1<p<\infty $ and $1\leq \rho_2, r\leq \infty.$ Let $v$ be the solution of heat equation
\begin{equation}\label{toy1.1}
\left\{
\begin{array}{l}\partial_{t}v-\mu\Delta v=F,\\ [1mm]
v|_{t=0}=v_{0}(x),\\[1mm]
 \end{array} \right.
\end{equation}
with $\mu>0$. Then, we have
\begin{eqnarray}\label{mmy}
\mu^{\frac{1}{\rho_{1}}}\|V\|_{\tilde{L}^{\rho_1}_{T}(\dot{B}^{\sigma+\frac{2}{\rho_1}}_{p,r})}
\lesssim\|v_{0}\|_{\dot{B}^{\sigma}_{p,r}}+\mu^{\frac{1}{\rho_{2}}-1}\|e^{\sqrt{c_{0}t}\Lambda_{1}}F\|_{\tilde{L}^{\rho_2}_{T}(\dot{B}^{\sigma-2+\frac{2}{\rho_2}}_{p,r})}
\end{eqnarray}
for all $\rho_1\in[\rho_2,\infty]$.
\end{lem}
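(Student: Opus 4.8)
The plan is to prove the Gevrey-weighted maximal regularity estimate \eqref{mmy} by reducing it, via the Fourier multiplier Lemma \ref{lem3.1} and Lemma \ref{lem3.2}, to the classical maximal $L^p$-regularity for the heat semigroup in Chemin--Lerner spaces. Working on a single dyadic block $\dot\Delta_j v$ with Fourier support in an annulus $\{|\xi|\sim 2^j\}$, Duhamel's formula gives
\begin{equation*}
\dot\Delta_j v(t) = e^{\mu t\Delta}\dot\Delta_j v_0 + \int_0^t e^{\mu(t-s)\Delta}\dot\Delta_j F(s)\,ds,
\end{equation*}
so applying the Gevrey operator $e^{\sqrt{c_0 t}\Lambda_1}$ yields
\begin{equation*}
\dot\Delta_j V(t) = e^{\sqrt{c_0 t}\Lambda_1}e^{\mu t\Delta}\dot\Delta_j v_0 + \int_0^t e^{\sqrt{c_0 t}\Lambda_1}e^{\mu(t-s)\Delta}e^{-\sqrt{c_0 s}\Lambda_1}\bigl(e^{\sqrt{c_0 s}\Lambda_1}\dot\Delta_j F(s)\bigr)\,ds.
\end{equation*}

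The key observation for the source term is the algebraic splitting of the exponent: I would write $\sqrt{c_0 t}=(\sqrt{c_0 t}-\sqrt{c_0(t-s)}-\sqrt{c_0 s})+\sqrt{c_0(t-s)}+\sqrt{c_0 s}$, so that the operator in the integrand factors as
\begin{equation*}
\underbrace{e^{(\sqrt{c_0 t}-\sqrt{c_0(t-s)}-\sqrt{c_0 s})\Lambda_1}}_{=:F_1}\;\cdot\;\underbrace{e^{\mu(t-s)\Delta+\sqrt{c_0(t-s)}\Lambda_1}}_{=:F_2}
\end{equation*}
(after distributing the factor $e^{\sqrt{c_0 s}\Lambda_1}$ onto $F$). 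By Lemma \ref{lem3.1}, $F_1$ is either the identity or an $L^1$ convolution kernel of norm bounded uniformly in $s,t$, hence bounded $L^p\to L^p$ uniformly; by (the extended form of) Lemma \ref{lem3.2}, $F_2 = e^{\mu(t-s)\Delta + \mu^{1/2}(t-s)^{1/2}\cdot(\sqrt{c_0/\mu})\Lambda_1}$ is, after the rescaling $a=\mu(t-s)$, a Fourier multiplier of the form $e^{a\Delta + m\sqrt a\,\Lambda_1}$ with $m=\sqrt{c_0/\mu}\in\mathbb R$, hence bounded $L^p\to L^p$ uniformly in $s,t$. The same reasoning (with $s=0$, $F_1=\mathrm{Id}$) handles the data term. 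Thus, up to a harmless constant, $\|\dot\Delta_j V(t)\|_{L^p}$ is controlled by the solution of the genuine heat equation $\partial_t \bar v - \mu\Delta\bar v = \bar F$ with data $|\dot\Delta_j v_0|$-sized and source $e^{\sqrt{c_0 s}\Lambda_1}\dot\Delta_j F$-sized, frequency-localized in the same annulus.

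From here I would invoke the standard frequency-localized heat smoothing estimate: there is $\kappa>0$ with $\|e^{\mu\tau\Delta}\dot\Delta_j g\|_{L^p}\lesssim e^{-\kappa\mu 2^{2j}\tau}\|\dot\Delta_j g\|_{L^p}$. For the data term, multiplying by $2^{j(\sigma+2/\rho_1)}$ and taking the $L^{\rho_1}_T$ norm in time produces the factor $2^{2j/\rho_1}\|e^{-\kappa\mu 2^{2j}\cdot}\|_{L^{\rho_1}_T}\lesssim \mu^{-1/\rho_1}$, giving $\mu^{1/\rho_1}2^{j(\sigma+2/\rho_1)}\|\dot\Delta_j V\|_{L^{\rho_1}_T L^p}\lesssim 2^{j\sigma}\|\dot\Delta_j v_0\|_{L^p}$; then taking $\ell^r_j$ yields the first term on the right of \eqref{mmy}. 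For the source term one applies the convolution (Young) inequality in time to $\int_0^t e^{-\kappa\mu 2^{2j}(t-s)}\|e^{\sqrt{c_0 s}\Lambda_1}\dot\Delta_j F(s)\|_{L^p}\,ds$ with exponents $1+1/\rho_1 = 1/\rho_2 + 1/\theta$ where $\theta$ is chosen so that $\|e^{-\kappa\mu 2^{2j}\cdot}\|_{L^\theta_T}\lesssim (\mu 2^{2j})^{-1/\theta}$; collecting powers of $2^j$ matches the target regularity $\sigma-2+2/\rho_2 \mapsto \sigma+2/\rho_1$ precisely because $2/\rho_1 - (2/\rho_2 - 2) = 2/\theta$, and collecting powers of $\mu$ gives $\mu^{1/\rho_1}\cdot\mu^{-1/\theta} = \mu^{1/\rho_2 - 1}$, again by the same exponent relation. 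Taking $\ell^r_j$ and using Minkowski's inequality (valid in the Chemin--Lerner setting) completes the estimate; the constraint $\rho_1\in[\rho_2,\infty]$ is exactly what makes $\theta\ge 1$ so that Young's inequality applies.

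The main obstacle is not any single step but the bookkeeping that ties together the three exponent balances — the regularity shift, the time-integrability exponent $\theta$, and the power of $\mu$ — all of which must be consistent, and checking that the Gevrey multipliers $F_1,F_2$ genuinely fall under the hypotheses of Lemmas \ref{lem3.1}--\ref{lem3.2} after the rescaling $a=\mu(t-s)$ (in particular that the cross term $\sqrt{c_0(t-s)}\Lambda_1$ pairs with $\mu(t-s)\Delta$ in the right ratio to match the form $\tfrac12 a\Delta + \sqrt a\,\Lambda_1$ up to an $\mathbb R$-valued coefficient, which is permitted by the remark following Lemma \ref{lem3.2}). Once those multiplier bounds are in hand, the argument is the classical Chemin--Lerner maximal-regularity computation for the heat equation applied to the Gevrey-transformed data and source, and I would present it as such rather than redoing the heat estimate from scratch.
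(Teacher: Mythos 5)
Your proposal follows the paper's proof essentially step for step: Duhamel's formula on dyadic blocks, the exponent splitting $\sqrt{c_0t}=(\sqrt{c_0t}-\sqrt{c_0(t-\tau)}-\sqrt{c_0\tau})+\sqrt{c_0(t-\tau)}+\sqrt{c_0\tau}$ handled by Lemmas \ref{lem3.1}--\ref{lem3.2}, then the frequency-localized decay $e^{-c\mu 2^{2j}(t-\tau)}$ and Young's inequality in time with $1+\frac{1}{\rho_1}=\frac{1}{\theta}+\frac{1}{\rho_2}$; the exponent bookkeeping for $\theta$, the regularity shift and the powers of $\mu$ is exactly the paper's and is correct. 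One step must be made precise, because as literally written it would fail: your $F_2=e^{\mu(t-s)\Delta+\sqrt{c_0(t-s)}\Lambda_1}$ absorbs the \emph{entire} heat semigroup into a uniformly $L^p$-bounded multiplier, after which there is no operator left from which to extract the smoothing factor $e^{-\kappa\mu2^{2j}(t-s)}$ that your subsequent Young-in-time computation relies on; without that factor one only gets the trivial $L^1_t$ bound with no gain of two derivatives. The fix is exactly the paper's choice: write $e^{\mu(t-s)\Delta}=e^{\frac{\mu}{2}(t-s)\Delta}\,e^{\frac{\mu}{2}(t-s)\Delta}$, pair only one half with the Gevrey factor --- i.e.\ apply the extended form of Lemma \ref{lem3.2} with $a=\frac{\mu}{2}(t-s)$ and $m=\sqrt{2c_0/\mu}$ --- and keep the other half $e^{\frac{\mu}{2}(t-s)\Delta}\dot\Delta_j$ to produce the exponential decay on the annulus $|\xi|\sim 2^j$ (and likewise for the data term with $a=\frac{\mu t}{2}$). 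With that one-line correction the argument is complete and coincides with the paper's.
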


\begin{proof} It follows from Duhamel's formula that
 \begin{equation}\label{duhamel}
v(t)=e^{\mu t \Delta}v_{0}+\int^{t}_{0}e^{\mu(t-\tau) \Delta}F(\tau)d\tau.
\end{equation}

Now applying Gevrey multiplier $e^{\sqrt{c_{0}t}\Lambda_{1}}$ on (\ref{duhamel}), we find $V$ satisfies the following equation
 \begin{equation*}
V(t)=e^{\sqrt{c_{0}t}\Lambda_1+\mu t \Delta}v_{0}+\int^{t}_{0}
e^{\sqrt{c_{0}(t-\tau)}\Lambda_1+\mu(t-\tau) \Delta}
e^{\sqrt{c_{0}}(-\sqrt{t-\tau}+\sqrt{t}-\sqrt{\tau})\Lambda_1}e^{\sqrt{c_{0}\tau}\Lambda_{1}}F(\tau)d\tau.
\end{equation*}
By taking $a=\frac{\mu t}{2} \,\,\text{with}\,\, m=\sqrt{2c_{0}/\mu}$, it follows from Lemmas \ref{lem3.1}-\ref{lem3.2} that
\begin{multline}
\|\dot{\Delta}_{j}V(t)\|_{L^{p}}\lesssim
\|\dot{\Delta}_{j}\big(e^{\frac{\mu}{2} t \Delta}v_{0}\big)\|_{L^{p}}+\int^{t}_{0}\big\|\dot{\Delta}_{j}\big(e^{\frac{\mu}{2}(t-\tau)\Delta}e^{\sqrt{c_{0}\tau}\Lambda_{1}}F(\tau)\big)\big\|_{L^{p}}d\tau\\
\lesssim
e^{-c\mu2^{2j}t}\|\dot{\Delta}_{j}v_{0}\|_{L^{p}}+\int^{t}_{0}e^{-c\mu2^{2j}(t-\tau)}\|\dot{\Delta}_{j}\big(e^{\sqrt{c_{0}\tau}\Lambda_{1}}F(\tau)\big)\|_{L^{p}}d\tau
\end{multline}
for some $c>0$.  Setting $\frac{1}{\rho_1}+1=\frac{1}{\theta}+\frac{1}{\rho_2}$. Consequently, performing $l^r$ norm after multiplying $2^{(\sigma +\frac{2}{\rho_1})j}$ and applying Young's inequality imply that
\begin{multline}
\|V\|_{\tilde{L}^{\rho_1}_{T}(\dot{B}^{\sigma+\frac{2}{\rho_1}}_{p,r})}
\lesssim\mu^{-\frac{1}{\rho_{1}}}\|v_{0}\|_{\dot{B}^{\sigma}_{p,r}}+
\big\{\|e^{-c\mu2^{2j}t}\|_{L^{\theta}_{T}}2^{(\sigma+\frac{2}{\rho_1})j}\|\dot{\Delta}_{j}\big(e^{\sqrt{c_{0}t}\Lambda_{1}}F(t)\big)\|_{L^{\rho_2}_{T}L^{p}}\big\}_{l^{r}}\\
\lesssim\mu^{-\frac{1}{\rho_{1}}}\|v_{0}\|_{\dot{B}^{\sigma}_{p,r}}+\mu^{\frac{1}{\rho_{2}}-\frac{1}{\rho_{1}}-1}\|e^{\sqrt{c_{0}t}\Lambda_{1}}F\|_{\tilde{L}^{\rho_2}_{T}(\dot{B}^{\sigma-2+\frac{2}{\rho_2}}_{p,r})},
\end{multline}
which is (\ref{mmy}) exactly.
\end{proof}

In order to match different Lebesgue indices at low frequencies and high frequencies, some product estimates involving Gevrey multiplier need to be developed, where different $q$ and $p$ are admissible.

\begin{lem}\label{d1}
Let $1< q, p<\infty$ such that $p\leq 2q$. Suppose that $s_{1}$ and $s_2$ fulfill with
$$s_{1}+s_{2}\geq\max d\Big(0, \frac{2}{p}-1\Big), \quad s_{1}\leq\min d\Big(\frac{1}{p}, \frac{2}{p}-\frac{1}{q}\Big), \quad s_{2}<\min d\Big(\frac{1}{p}, \frac{2}{p}-\frac{1}{q}\Big).$$
Then it holds that
\begin{eqnarray}\label{m1}\|e^{\sqrt {c_{0}t}\Lambda_{1}}(ab)\|_{\dot{B}^{s}_{q,\infty}}\lesssim\|A\|_{\dot{B}^{s_{1}}_{p,1}}\|B\|_{\dot{B}^{s_{2}}_{p,\infty}},\,\,\,\, \mathrm{with} \,\,\,\,s+\frac{2d}{p}-\frac{d}{q}=s_{1}+s_{2}.\end{eqnarray}
Additionally, if
$$s_{1}+s_{2}\geq\max d\Big(0, \frac{1}{p}+\frac{1}{q}-1\Big), \quad s_{1}\leq\frac{d}{p}, \quad s_{2}<\min d\Big(\frac{1}{p}, \frac{1}{q}\Big),$$
then it holds that
\begin{eqnarray}\label{m2}\|e^{\sqrt {c_{0}t}\Lambda_{1}}(ab)\|_{\dot{B}^{s}_{q,\infty}}\lesssim\|A\|_{\dot{B}^{s_{1}}_{p,1}}\|B\|_{\dot{B}^{s_{2}}_{q,\infty}},\,\,\,\, \mathrm{with} \,\,\,\,s+\frac{d}{p}=s_{1}+s_{2}.\end{eqnarray}
\end{lem}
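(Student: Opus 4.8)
The plan is to reduce both \eqref{m1} and \eqref{m2} to estimates on the bilinear operators $\mathcal{B}_t$ of Lemma \ref{lem5.3} via Bony's paraproduct decomposition. Write $ab = T_a b + T_b a + R(a,b)$, apply the Gevrey multiplier $e^{\sqrt{c_0 t}\Lambda_1}$ to each piece, and insert factors $e^{-\sqrt{c_0 t}\Lambda_1}e^{\sqrt{c_0 t}\Lambda_1}$ on each argument so that, at the level of fixed frequency blocks, the operator $e^{\sqrt{c_0 t}\Lambda_1}(\dot S_{j-1}a\,\dot\Delta_j b)$ becomes $\mathcal{B}_t$ applied to $\dot S_{j-1}A$ and $\dot\Delta_j B$ up to harmless $L^p\to L^p$ multipliers (the spectral localizations commute with the Gevrey kernel only approximately, but the standard device is to write $e^{\sqrt{c_0 t}\Lambda_1}\dot\Delta_j = e^{\sqrt{c_0 t}(|\xi|_1 - |\xi+\eta|_1 + |\xi+\eta|_1)}$-type splittings, or more cleanly to use that $\mathcal{B}_t$ already absorbs the difference of the $\ell^1$ symbols). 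Thus the whole point is that Lemma \ref{lem5.3} lets us treat $\mathcal{B}_t$ exactly as ordinary multiplication in the Besov calculus, uniformly in $t\ge 0$.

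First I would handle the low-high paraproduct $T_a b$. For a fixed block, $\|e^{\sqrt{c_0t}\Lambda_1}(\dot S_{j-1}a\,\dot\Delta_j b)\|_{L^q} \lesssim \|\dot S_{j-1}A\|_{L^\infty}\|\dot\Delta_j B\|_{L^q}$ by Lemma \ref{lem5.3} with $(p_1,p_2)=(\infty,q)$ — or, when $q<\infty$ forces us to stay in the $L^p$-bounded range, via Hölder with $1/q = 1/p_1 + 1/p_2$ and Bernstein to trade integrability for regularity. The condition $s_1 \le d/p$ (resp. $s_1 \le \min d(1/p, 2/p - 1/q)$) is exactly what makes $\|\dot S_{j-1}A\|_{L^\infty} \lesssim \sum_{j'<j}2^{j'd/p}\|\dot\Delta_{j'}A\|_{L^p} \lesssim 2^{j(d/p - s_1)}\|A\|_{\dot B^{s_1}_{p,1}}$ summable (the $\ell^1$ summability in $j'$ is why we need $p,1$ on the $A$-factor), and after multiplying by $2^{js}$ and taking $\ell^\infty_j$ one lands in $\dot B^s_{q,\infty}$ with the stated scaling relation $s + \tfrac{2d}{p} - \tfrac{d}{q} = s_1+s_2$ (resp. $s + \tfrac dp = s_1+s_2$). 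The symmetric term $T_b a$ is the same computation with the roles swapped; here one uses $s_2 < \min d(1/p,\dots)$ (strict, to get $\ell^\infty$ rather than needing $\ell^1$ on the $B$-factor) and the fact that $B$ only lives in a $\dot B^{s_2}_{\cdot,\infty}$ space.

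Next comes the remainder $R(a,b) = \sum_j \dot\Delta_j a\,\tilde{\dot\Delta}_j b$, which is where the lower bounds $s_1+s_2 \ge \max d(0,\dots)$ enter. Here $e^{\sqrt{c_0t}\Lambda_1}(\dot\Delta_j a\,\tilde{\dot\Delta}_j b)$ is spectrally supported in a ball of radius $\sim 2^j$, so for the target block $\dot\Delta_k$ only $j \gtrsim k$ contribute; one estimates $\|\mathcal{B}_t(\dot\Delta_j A, \tilde{\dot\Delta}_j B)\|_{L^q}$ by Lemma \ref{lem5.3} — choosing $(p_1,p_2)$ with $1/p_1+1/p_2 = 1/r$ for an appropriate $r \le q$ (this is the place where $p \le 2q$ is used, so that $1/p_1 = 1/p_2 = 1/p$ is an admissible split landing in $L^{p/2} \hookrightarrow L^q$ after Bernstein), then Bernstein-embed $L^{p/2}\to L^q$ costing $2^{jd(2/p - 1/q)}$, multiply by $2^{ks}$, and sum the geometric series in $j\ge k$; convergence requires $s < d(2/p-1/q) - $ (sum of the regularities absorbed), which unwinds to $s_1 + s_2 > d\max(0, 2/p-1)$ for \eqref{m1} and the analogous bound for \eqref{m2}. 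The $\ell^1$ index on $A$ again ensures the inner sum over the $A$-frequencies is controlled, while the $\ell^\infty$ on $B$ and on the output is propagated.

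**Main obstacle.** The genuinely non-routine point is controlling the Gevrey multiplier across the product — i.e., justifying that $e^{\sqrt{c_0t}\Lambda_1}(fg)$ can be rewritten, block by block, in terms of $\mathcal{B}_t(F,G)$ with operators that are $L^p$-bounded \emph{uniformly in $t$}. Because $|\xi+\eta|_1 \le |\xi|_1 + |\eta|_1$ only subadditively (not with equality), the naive identity $e^{\sqrt{c_0t}\Lambda_1}(fg) = (e^{\sqrt{c_0t}\Lambda_1}f)(e^{\sqrt{c_0t}\Lambda_1}g)$ fails, and the correction is precisely the bounded bilinear operator $\mathcal{B}_t$ built from the $K_\alpha, Z_{t,\alpha,\beta}$ decomposition recalled before Lemma \ref{lem5.3}. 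Once that reduction is in place, everything else is the standard Bony-paraproduct bookkeeping, with the numerology of $s_1, s_2, s$ dictated entirely by Bernstein embeddings between $L^p$, $L^{p/2}$ and $L^q$ and by whether a given frequency sum needs $\ell^1$ or merely $\ell^\infty$ summability; the hypotheses in the statement are exactly the sharp conditions for those sums to converge.
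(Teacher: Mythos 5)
Your proposal follows essentially the same route as the paper: Bony decomposition, reduction of the Gevrey-twisted product to the bilinear operators $\mathcal{B}_t$ via the $K_\alpha$, $Z_{t,\alpha,\beta}$ machinery of Lemma \ref{lem5.3}, and then standard Bernstein/H\"older bookkeeping for the two paraproducts and the remainder, with the hypotheses on $s_1,s_2$ entering exactly where you say they do. You also correctly single out the one non-routine point, namely that $e^{\sqrt{c_0t}\Lambda_1}(fg)$ must be rewritten through $\mathcal{B}_t$ because the $\ell^1$ symbol is only subadditive.

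One step as literally written would fail, though: in the remainder term you propose the split $p_1=p_2=p$ "landing in $L^{p/2}$". For $2<p\leq 2q$ this is the paper's argument, but for $1<p\leq 2$ the exponent $p/2\leq 1$ is outside the admissible range of Lemma \ref{lem5.3} (which needs $1<p_1,p_2<\infty$) and $L^{p/2}$ is not even a normed space there. The paper's fix is to pick $1<p_0<q$ close to $1$, Bernstein each spectrally localized factor up from $L^p$ to $L^{2p_0}$, multiply in $L^{p_0}$, and then Bernstein $L^{p_0}\hookrightarrow L^q$ on the output block; the resulting summability condition $s_1+s_2\geq \frac{2d}{p}-\frac{d}{p_0}$ is achievable for $p_0$ near $1$ precisely when $s_1+s_2>\frac{2d}{p}-d$, which is where the $\max d(0,\tfrac2p-1)$ threshold comes from in that regime. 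Your hedge "for an appropriate $r\leq q$" gestures at this, but the intermediate-exponent device is the missing ingredient you would need to make the $1<p\leq 2$ case go through; also note that Lemma \ref{lem5.3} cannot be invoked with $(p_1,p_2)=(\infty,q)$ for the paraproduct — the paper instead applies the $L^p$-bounded multipliers first and then lifts the localized low-frequency factor to $L^\infty$ by Bernstein, which is the fallback you already describe.
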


\begin{proof}
We first focus on the proof of (\ref{m1}). The Bony's decomposition for the product of two tempered distribution (see for example \cite{BCD}) reads as
$$ab=T_{a}b+R(a,b)+T_{b}a,$$
where the paraproduct $T_{a}b$ is defined by
$$T_{a}b=\sum_{j\in \mathbb{Z}}\dot S_{j-1} a\dot{\Delta}_j b$$
and the remainder $R(a,b)$ is given by
$$R(a,b)=\sum_{j\in \mathbb{Z}}\dot{\Delta}_ja(\dot{\Delta}_{j-1}b+\dot{\Delta}_{j}b+\dot{\Delta}_{j+1}b).$$
Furthermore, employing the definition of $\mathcal{B}_{t}$ leads to
\begin{equation}\label{estimlocT}
e^{\sqrt {c_{0}t}\Lambda_{1}} T_{a}b=\sum_{j\in \mathbb{Z}}W_{j}\,\,\, \mbox{with}\,\,\,
W_{j}\triangleq \mathcal{B}_t(\dot S_{j-1}A,\dot\Delta_jB).
\end{equation}
Notice that $\dot{S}_{j-1}Z_{t,\alpha,\beta}A=\sum\limits_{j'\leq j-2}\dot\Delta_{j'}Z_{t,\alpha,\beta}A$, thanks to properties of operator $K_{\alpha},Z_{t,\alpha, \beta}$,
 we obtain
\begin{eqnarray}\label{cia1}
\|W_{j}\|_{L^q}&\lesssim& \left\{
\begin{array}{l}\sum\limits_{j'\leq j-2}\|\dot\Delta_{j'}Z_{t,\alpha,\beta}A\|_{L^m}\|\dot\Delta_jB\|_{L^p},\,\,\,\,\, q\leq p,\frac{1}{q}=\frac{1}{m}+\frac{1}{p},\\ [1mm]
 \sum\limits_{j'\leq j-2}\|\dot\Delta_{j'}Z_{t,\alpha,\beta}A\|_{L^\infty}\|\dot\Delta_jB\|_{L^q},\,\,\,\,q> p.\\[1mm]
 \end{array} \right.
\end{eqnarray}
If $q\leq p\leq 2q$, by using Bernstein inequality, we have
\begin{eqnarray*}
\|W_{j}\|_{L^q}\lesssim \sum_{j'\leq j-2}2^{(\frac{2d}{p}-\frac{d}{q}-s_{1})j'}2^{s_{1}j'}\|\dot{\Delta}_{j'}A\|_{L^p}\|\dot{\Delta}_{j}B\|_{L^p}.
\end{eqnarray*}
Keep in mind that $s+\frac{2d}{p}-\frac{d}{q}=s_{1}+s_{2}$ and $s_{1}\leq\frac{2d}{p}-\frac{d}{q}$, by Young's inequality for series, we get
\begin{eqnarray*}
\|e^{\sqrt {c_{0}t}\Lambda_1} T_{a}b\|_{\dot{B}^{s}_{q,\infty}}\leq\Big(2^{js}\|W_{j}\|_{L^q}\Big)_{\ell^\infty}
\lesssim\|A\|_{\dot{B}^{s_{1}}_{p,1}} \|B\|_{\dot{B}^{s_{2}}_{p,\infty}}.
\end{eqnarray*}
On the other hand, if $q>p$ then we have
\begin{eqnarray*}
\|W_{j}\|_{L^q}
\lesssim \sum_{j'\leq j-2}2^{(\frac{d}{p}-s_{1})j'}2^{s_{1}j'}\|\dot{\Delta}_{j'}A\|_{L^p}2^{(\frac{d}{p}-\frac{d}{q}-s_{2})j}2^{s_{2}j}\|\dot{\Delta}_{j}B\|_{L^p}.
\end{eqnarray*}
Note that  $s_{1}\leq\frac{d}{p}$, we utilize Young's inequality again to get
\begin{eqnarray*}
\|e^{\sqrt {c_{0}t}\Lambda_1} T_{a}b\|_{\dot{B}^{s}_{q,\infty}}
\lesssim\|A\|_{\dot{B}^{s_{1}}_{p,1}} \|B\|_{\dot{B}^{s_{2}}_{p,\infty}}.
\end{eqnarray*}
As for the paraproduct $T_{b}a$, we proceed the analysis in the similar way and obtain
\begin{eqnarray*}
\|e^{\sqrt {c_{0}t}\Lambda_1} T_{b}a\|_{\dot{B}^{s}_{q,\infty}}
\lesssim\|A\|_{\dot{B}^{s_{1}}_{p,1}} \|B\|_{\dot{B}^{s_{2}}_{p,\infty}},
\end{eqnarray*}
provided that $s_{2}<\min d\big(\frac{1}{p}, \frac{2}{p}-\frac{1}{q}\big)$.

To bound remainder $R(a,b)$, by the spectrum cut-off, one has
$$e^{\sqrt {c_{0}t}\Lambda_1} \dot{\Delta}_{j}R(a,b)=\sum_{j\leq j'+2}Z_{j}\,\,\, \text{with} \,\,\,Z_{j}\triangleq\dot{\Delta}_{j}\mathcal{B}_t(\tilde{\dot{\Delta}}_{j'}a,\dot{\Delta}_{j'}b),$$
where $\tilde{\dot{\Delta}}_{j'}a\triangleq\sum_{|k-j'|\leq1}\dot{\Delta}_{k}a.$ We handle the case $1< p\leq2$ first. By H\"{o}lder and Bernstein inequalities, we arrive at for all $1<p_{0}<q$ that
\begin{eqnarray}\label{siu}
\|Z_{j}\|_{L^q}&=&
\|\dot{\Delta}_{j}\mathcal{B}_t(\tilde{\dot{\Delta}}_{j'}a,\dot{\Delta}_{j'}b)\|_{L^q}\\ \nonumber
&\lesssim&2^{(\frac{d}{p_{0}}-\frac{d}{q})j}\|\dot{\Delta}_{j}\mathcal{B}_t(\tilde{\dot{\Delta}}_{j'}a,\dot{\Delta}_{j'}b)\|_{L^{p_{0}}}\\ \nonumber
&\lesssim&2^{(\frac{d}{p_{0}}-\frac{d}{q})j}\sum_{j'\geq j-2}\|\dot{\Delta}_{j'}A\|_{L^{2p_{0}}}\|\dot{\Delta}_{j'}B\|_{L^{2p_{0}}}\\ \nonumber
&\lesssim&2^{(\frac{d}{p_{0}}-\frac{d}{q})j}\sum_{j'\geq j-2}2^{(\frac{2d}{p}-\frac{d}{p_{0}}-s_{1}-s_{2})j'}2^{s_{1}j'}\|\dot{\Delta}_{j'}A\|_{L^{p}}2^{s_{2}j'}\|\dot{\Delta}_{j'}B\|_{L^p}.
\end{eqnarray}
If $s_{1}+s_{2}>\frac{2d}{p}-d$, then it is able to find some $p_{0}$ close to $1$ such that $s_{1}+s_{2}\geq\frac{2d}{p}-\frac{d}{p_{0}}$,  then one can get
\begin{eqnarray*}
\|e^{\sqrt {c_{0}t}\Lambda_1} R(a,b)\|_{\dot{B}^{s}_{q,\infty}}
\lesssim\|A\|_{\dot{B}^{s_{1}}_{p,1}}\|B\|_{\dot{B}^{s_{2}}_{p,\infty}},
\end{eqnarray*}
where Young's inequality for series was performed.

On the other hand, for $2< p\leq2q$, H\"{o}lder and Bernstein inequalities combined with Lemma \ref{lem5.3} imply that
\begin{eqnarray}\label{siusiu}
\|Z_{j}\|_{L^q}&=&
\|\dot{\Delta}_{j}\mathcal{B}_t(\tilde{\dot{\Delta}}_{j'}a,\dot{\Delta}_{j'}b)\|_{L^q}\\ \nonumber
&\lesssim&2^{(\frac{2d}{p}-\frac{d}{q})j}\|\dot{\Delta}_{j}\mathcal{B}_t(\tilde{\dot{\Delta}}_{j'}a,\dot{\Delta}_{j'}b)\|_{L^\frac{p}{2}}\\ \nonumber
&\lesssim&2^{(\frac{2d}{p}-\frac{d}{q})j}\sum_{j'\geq j-2}\|\dot{\Delta}_{j'}A\|_{L^{p}}\|\dot{\Delta}_{j'}B\|_{L^p}\\ \nonumber
&\lesssim&2^{(\frac{2d}{p}-\frac{d}{q})j}\sum_{j'\geq j-2}2^{(-s_{1}-s_{2})j'}2^{s_{1}j'}\|\dot{\Delta}_{j'}A\|_{L^{p}}2^{s_{2}j'}\|\dot{\Delta}_{j'}B\|_{L^p}.
\end{eqnarray}
Owing to $s_{1}+s_{2}\geq0$, we are led to
\begin{eqnarray*}
\|e^{\sqrt {c_{0}t}\Lambda_1} R(a,b)\|_{\dot{B}^{s}_{q,\infty}}
\lesssim\|A\|_{\dot{B}^{s_{1}}_{p,1}}\|B\|_{\dot{B}^{s_{2}}_{p,\infty}}.
\end{eqnarray*}
Therefore, we achieve (\ref{m1}). The proof of (\ref{m2}) follows the lines from \eqref{estimlocT}-\eqref{siusiu}, which is left to the interesting readers.
\end{proof}

We would like to mention that Lemma \ref{d1} also holds in the framework of Chemin-Lerner's spaces, whereas the time exponent fulfills H\"{o}lder inequality only.
In order to explore the Gevrey decay framework, the following result is crucial, which has been shown recently by \cite{CDX}.
\begin{lem}\label{lem5.1} Let $(r,R)$ satisfy $0<r<R$. For any tempered distribution $u$ fulfilling $\mathrm{supp}\widehat u\subset \lambda C$,
there exists a constant $c>0$ such that for all $\zeta\in \mathbb{R}$ and $\alpha>0$, the following inequality holds for all $p\in [1,\infty]$:
$$\|\Lambda^\zeta e^{-\alpha\Lambda_{1}} u\|_{L^p}\lesssim\lambda^\zeta e^{-c\alpha\lambda}\|u\|_{L^p},$$
where $C(0,r,R)\triangleq \{\xi\in \mathbb{R}^d|r\leq |\xi|\leq R\}$ is the annulus.
\end{lem}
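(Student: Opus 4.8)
The plan is to reduce the estimate, by Young's convolution inequality, to an $L^1$ bound on the convolution kernel of the Fourier multiplier $|\xi|^\zeta e^{-\alpha|\xi|_1}$ frequency-localized to the annulus $\lambda C$, and then to produce the factor $e^{-c\alpha\lambda}$ by a finite decomposition of that annulus into cones along the coordinate axes. Concretely, fix a smooth cut-off $\phi$ with $\phi\equiv1$ on $\{r\le|\eta|\le R\}$ and $\mathrm{supp}\,\phi\subset\{r'\le|\eta|\le R'\}$ for some $0<r'<r<R<R'$. Since $\mathrm{supp}\,\widehat u\subset\lambda C$ we have $\widehat u=\phi(\cdot/\lambda)\widehat u$, so $\Lambda^\zeta e^{-\alpha\Lambda_1}u=\cK_{\alpha,\lambda}*u$ with $\widehat{\cK_{\alpha,\lambda}}(\xi)=|\xi|^\zeta e^{-\alpha|\xi|_1}\phi(\xi/\lambda)$, and Young's inequality (valid for every $p\in[1,\infty]$) reduces the claim to $\|\cK_{\alpha,\lambda}\|_{L^1}\lesssim\lambda^\zeta e^{-c\alpha\lambda}$. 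The rescaling $\xi=\lambda\eta$ gives $\cK_{\alpha,\lambda}(x)=\lambda^{d+\zeta}\cK_{\alpha\lambda}(\lambda x)$, where $\widehat{\cK_\beta}(\eta)=|\eta|^\zeta e^{-\beta|\eta|_1}\phi(\eta)$, hence $\|\cK_{\alpha,\lambda}\|_{L^1}=\lambda^\zeta\|\cK_{\alpha\lambda}\|_{L^1}$, and it remains to prove $\|\cK_\beta\|_{L^1}\lesssim e^{-c\beta}$ uniformly in $\beta=\alpha\lambda>0$.

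To do so I would use that any $\eta$ with $|\eta|\ge r'$ has a coordinate with $|\eta_i|\ge r'/\sqrt d$, so one may fix a finite smooth partition of unity $\phi=\sum_{i=1}^{d}\sum_{\epsilon=\pm}\theta_{i,\epsilon}$ with $\mathrm{supp}\,\theta_{i,\epsilon}\subset\{\epsilon\eta_i\ge c_0,\ |\eta|\le R'\}$, where $c_0:=r'/(2\sqrt d)$. On $\mathrm{supp}\,\theta_{i,\epsilon}$ one has $|\eta|_1=\epsilon\eta_i+\sum_{j\ne i}|\eta_j|$; choosing a smooth compactly supported one-dimensional cut-off $\rho$ that equals $1$ on the range of $\eta_i$ over $\mathrm{supp}\,\theta_{i,\epsilon}$, we may write
\[
|\eta|^\zeta e^{-\beta|\eta|_1}\theta_{i,\epsilon}(\eta)=\bigl(|\eta|^\zeta\theta_{i,\epsilon}(\eta)\bigr)\cdot\bigl(\rho(\eta_i)e^{-\beta\epsilon\eta_i}\bigr)\cdot\prod_{j\ne i}e^{-\beta|\eta_j|}.
\]
The three factors depend on all variables, on $\eta_i$ alone, and on $(\eta_j)_{j\ne i}$ respectively, so the kernel of the product is the convolution on $\R^d$ of the three associated kernels, and summing over the $2d$ pieces, $\|\cK_\beta\|_{L^1}$ is controlled by the product of their $L^1$ norms.

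Now for the three factors: the symbol $|\eta|^\zeta\theta_{i,\epsilon}$ is smooth, compactly supported away from $0$, and \emph{independent of $\beta$}, so its kernel has $L^1$ norm bounded by a constant; the kernel of $\prod_{j\ne i}e^{-\beta|\eta_j|}$ is the product of Poisson kernels $\prod_{j\ne i}\tfrac{1}{\pi}\tfrac{\beta}{\beta^2+x_j^2}$, whose $L^1$ norm in $(x_j)_{j\ne i}$ equals $1$ for every $\beta>0$; and for $f_\beta(\eta_i):=\rho(\eta_i)e^{-\beta\epsilon\eta_i}$ one has $e^{-\beta\epsilon\eta_i}\le e^{-\beta c_0/2}$ on $\mathrm{supp}\,\rho$, so $N$-fold integration by parts in the integral defining $\widehat f_\beta$ yields $|\widehat f_\beta(x_i)|\lesssim(1+\beta)^N(1+|x_i|)^{-N}e^{-\beta c_0/2}$ for every $N$ (with a constant depending on $N$, since each derivative of the exponential produces only a power of $\beta$), and taking $N=2$ gives $\|\widehat f_\beta\|_{L^1(\R)}\lesssim(1+\beta)^2e^{-\beta c_0/2}\lesssim e^{-c\beta}$. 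Multiplying the three bounds gives $\|\cK_\beta\|_{L^1}\lesssim e^{-c\beta}$, and tracing back through the rescaling and Young's inequality finishes the proof; note that $c$ ends up depending only on $r$ and $d$ (not on $\zeta$, $\alpha$, $\lambda$ or $p$), while the implicit constant may depend on $\zeta$, $R$ and $d$.

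The main obstacle is the lack of smoothness of $e^{-\alpha|\xi|_1}$ across the hyperplanes $\xi_i=0$. The naive strategy of peeling off $e^{-c\alpha\lambda}$ and estimating the remainder by a convolution fails, because convolution with the mass-one Poisson kernel $\prod_i\tfrac{1}{\pi}\tfrac{\alpha}{\alpha^2+x_i^2}$ hands back precisely the exponential one has just spent, leaving only the weaker bound $\lesssim\lambda^\zeta$. The cone decomposition circumvents this by localizing to a region where one coordinate $\eta_{i_0}$ is bounded away from $0$ with a fixed sign, so that $e^{-\beta|\eta_{i_0}|}=e^{-\beta\epsilon\eta_{i_0}}$ is an honest entire exponential and the whole decay $e^{-c\beta}$ can be extracted from that single direction by integration by parts; a minor point to keep in mind is that the one-dimensional cut-off $\rho$ must be compactly supported, not merely supported on a half-line, so that no spurious factor $\beta^{-1}$ appears as $\beta\to0^+$.
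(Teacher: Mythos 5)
Your proof is correct and complete. The paper itself gives no argument for this lemma --- it is quoted from Charve--Danchin--Xu \cite{CDX} --- and your reduction (Young's inequality, rescaling to $\beta=\alpha\lambda$, and a cone decomposition of the annulus so that the whole factor $e^{-c\beta}$ is extracted from the one coordinate direction where $e^{-\beta|\eta_{i}|}$ is a genuine smooth exponential, while the remaining directions contribute mass-one Poisson kernels) is exactly the standard mechanism behind such Gevrey--Bernstein estimates for the non-smooth symbol $e^{-\alpha|\xi|_1}$; your closing remark correctly identifies why the naive "peel off $e^{-c\alpha\lambda}$ and bound the rest by a multiplier of norm one" shortcut fails. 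The only cosmetic point is that $\phi$ should be taken identically $1$ on a slightly larger open annulus (not merely on the closed annulus $r\le|\eta|\le R$) so that $\phi(\cdot/\lambda)\widehat u=\widehat u$ holds for an arbitrary distribution supported there.
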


Lemma \ref{lem5.1} enables us to initiate the key idea \eqref{key!} for the time decay, which actually developed the Oliver-Titi's argument to Besov framework. Precisely,
\begin{lem}\label{lem5.2}
Let $\sigma\in \mathbb{R}$ and $1\leq p\leq\infty$. For any $\zeta>0$ and any tempered distribution $u$, it holds for all $t>0$ that
$$\|\Lambda^\zeta u\|^{\ell}_{{\dot B}^{\sigma}_{q,1}}\lesssim t^{-\frac{\zeta}{2}}\|e^{\sqrt {c_{0}t} \Lambda_{1}}u\|^{\ell}_{{\dot B}^{\sigma}_{q,\infty}}$$
and
$$\|\Lambda^\zeta u\|^{h}_{{\dot B}^{\sigma}_{p,1}}\lesssim t^{-\frac{\zeta}{2}}e^{-a\sqrt{t}}\|e^{\sqrt {c_{0}t} \Lambda_{1}}u\|^{h}_{{\dot B}^{\sigma}_{p,\infty}},$$
where $a>0$ is to be confirmed.
\end{lem}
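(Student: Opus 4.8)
The plan is to decompose the dyadic blocks into low and high frequency ranges and estimate each $\dot\Delta_j$ piece separately, using the exponential gain provided by Lemma \ref{lem5.1}. For any fixed $j$, the block $\dot\Delta_j u$ has Fourier support in an annulus of size $\sim 2^j$, so Lemma \ref{lem5.1} (applied with $\zeta$ there equal to our $\zeta$, $\alpha = \sqrt{c_0 t}$, $\lambda = 2^j$) gives, up to writing $u = e^{-\sqrt{c_0 t}\Lambda_1}\big(e^{\sqrt{c_0 t}\Lambda_1}u\big)$,
\begin{equation*}
\|\Lambda^\zeta \dot\Delta_j u\|_{L^q} = \|\Lambda^\zeta e^{-\sqrt{c_0 t}\Lambda_1}\dot\Delta_j\big(e^{\sqrt{c_0 t}\Lambda_1}u\big)\|_{L^q}
\lesssim 2^{j\zeta}\, e^{-c\sqrt{c_0 t}\, 2^j}\,\|\dot\Delta_j\big(e^{\sqrt{c_0 t}\Lambda_1}u\big)\|_{L^q}.
\end{equation*}
This is the key pointwise-in-$j$ inequality; everything else is bookkeeping with the Besov norms.

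For the low-frequency part, I would multiply by $2^{j\sigma}$, sum over $j<j_0$, and bound the sup of $2^{j\sigma}\|\dot\Delta_j(e^{\sqrt{c_0 t}\Lambda_1}u)\|_{L^q}$ by $\|e^{\sqrt{c_0 t}\Lambda_1}u\|^{\ell}_{\dot B^{\sigma}_{q,\infty}}$, leaving the scalar series $\sum_{j<j_0} 2^{j\zeta} e^{-c\sqrt{c_0 t}\,2^j}$. Setting $x = \sqrt{t}\,2^j$ one recognizes this as a Riemann-type sum for $\int_0^\infty x^{\zeta-1}e^{-c\sqrt{c_0}\,x}\,dx$ scaled by $t^{-\zeta/2}$; since $\zeta>0$ this converges, yielding the factor $t^{-\zeta/2}$. (More carefully: $\sum_{2^j \le 1/\sqrt t} 2^{j\zeta}\lesssim t^{-\zeta/2}$ since $\zeta>0$, while $\sum_{2^j>1/\sqrt t}2^{j\zeta}e^{-c\sqrt{c_0 t}\,2^j}\lesssim t^{-\zeta/2}\sum_{k\ge0}(2^k)^\zeta e^{-c\sqrt{c_0}\,2^k}\lesssim t^{-\zeta/2}$.) This gives the first inequality.

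For the high-frequency part the scheme is the same but now $j\ge j_0$, so $2^j\ge 2^{j_0}=:\delta>0$ is bounded below. I would split the exponential as $e^{-c\sqrt{c_0 t}\,2^j} = e^{-\frac{c}{2}\sqrt{c_0 t}\,2^j}\,e^{-\frac{c}{2}\sqrt{c_0 t}\,2^j}\le e^{-\frac{c}{2}\sqrt{c_0}\,\delta\sqrt{t}}\,e^{-\frac{c}{2}\sqrt{c_0 t}\,2^j}$, pulling out the uniform decay factor $e^{-a\sqrt t}$ with $a := \frac{c}{2}\sqrt{c_0}\,\delta>0$; the remaining sum $\sum_{j\ge j_0}2^{j\zeta}e^{-\frac{c}{2}\sqrt{c_0 t}\,2^j}$ is again $\lesssim t^{-\zeta/2}$ by the same computation as above (it is in fact even smaller, but $t^{-\zeta/2}$ suffices and is uniform). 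Combining, and using $\|\cdot\|^{h}_{\dot B^\sigma_{p,1}} = \sum_{j\ge j_0}2^{j\sigma}\|\dot\Delta_j\cdot\|_{L^p}$, gives the second inequality with the stated $a$.

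\textbf{Main obstacle.} The only genuinely quantitative point is the bound $\sum_j 2^{j\zeta}e^{-c\sqrt{c_0 t}\,2^j}\lesssim t^{-\zeta/2}$ uniformly in $t>0$, i.e.\ verifying that the dyadic sum behaves like the corresponding Gamma integral; this is where the hypothesis $\zeta>0$ is essential (for $\zeta\le0$ the low-frequency tail $\sum_{2^j\le 1/\sqrt t}2^{j\zeta}$ would diverge or fail to produce the right power). One must also be slightly careful that Lemma \ref{lem5.1} is stated for a single dyadic block, so the decomposition into blocks must be performed before invoking it — the operators $\Lambda^\zeta e^{-\alpha\Lambda_1}$ and $\dot\Delta_j$ commute, so this is harmless. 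No regularity or summability index plays any role beyond the trivial embedding $\ell^1\hookleftarrow\ell^\infty$ already built into the $\dot B^\sigma_{q,1}$ versus $\dot B^\sigma_{q,\infty}$ discrepancy, which is exactly compensated by the summable factor $e^{-c\sqrt{c_0 t}\,2^j}$.
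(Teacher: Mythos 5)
Your proposal is correct and follows essentially the same route as the paper: apply Lemma \ref{lem5.1} blockwise to $\Lambda^\zeta e^{-\sqrt{c_0t}\Lambda_1}\dot\Delta_j(e^{\sqrt{c_0t}\Lambda_1}u)$, sum the resulting scalar series $\sum_j(\sqrt t\,2^j)^\zeta e^{-c\sqrt{c_0t}\,2^j}\lesssim 1$ for $\zeta>0$, and for high frequencies split the exponential to extract $e^{-a\sqrt t}$ with $a=\tfrac{c}{2}\sqrt{c_0}\,2^{j_0}$, exactly as in the paper. Your verification of the dyadic-sum bound is in fact slightly more detailed than the paper's, which simply asserts it.
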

\begin{proof}
By using Lemma \ref{lem5.1} and Bernstein inequality, we have for frequency cut-off $j_{0}$ that
\begin{eqnarray}\label{Gd}
t^\frac{\zeta}{2}\|\Lambda^\zeta u\|^{\ell}_{{\dot B}^{\sigma}_{q,1}}
&=&t^\frac{\zeta}{2}\big\|2^{j\sigma}\|\Lambda^\zeta e^{-\sqrt {c_{0}t} \Lambda_{1}}\Delta_{j}(e^{\sqrt {c_{0}t} \Lambda_{1}}u)\|_{L^q}\big\|_{l^{1}_{j\leq j_{0}}}\\
\nonumber&\lesssim&\big\|(\sqrt t 2^j)^\zeta e^{-c\sqrt {c_{0}t} 2^j} 2^{j\sigma}\|\Delta_{j}(e^{\sqrt {c_{0}t} \Lambda_{1}}u)\|_{L^q}\big\|_{l^{1}_{j\leq j_{0}}}
\nonumber\\ \nonumber&\lesssim&\big(\sum_{j\leq j_{0} }(\sqrt t 2^j)^\zeta e^{-c\sqrt {c_{0}t} 2^j}\big)\|e^{\sqrt {c_{0}t} \Lambda_{1}}u\|^{\ell}_{{\dot B}^{\sigma}_{q,\infty}}\\ \nonumber&\lesssim&\|e^{\sqrt {c_{0}t} \Lambda_{1}}u\|^{\ell}_{{\dot B}^{\sigma}_{q,\infty}},
\end{eqnarray}
where the fact $\sum_{j\in \mathbb{Z} }(\sqrt t 2^j)^\zeta e^{-c\sqrt {c_{0}t} 2^j}\leq C$ for $\zeta>0$ was used in the last inequality.

Furthermore, for the high frequencies, we arrive at
\begin{eqnarray}\label{Gd2}
t^\frac{\zeta}{2}\|\Lambda^\zeta u\|^{h}_{{\dot B}^{\sigma}_{p,1}}
&=&t^\frac{\zeta}{2}\big\|2^{j\sigma}\|\Lambda^\zeta e^{-\sqrt {c_{0}t} \Lambda_{1}}\Delta_{j}(e^{\sqrt {c_{0}t} \Lambda_{1}}u)\|_{L^p}\big\|_{l^{1}_{j\geq j_{0}}}\\
\nonumber&\lesssim&\big\|(\sqrt t 2^j)^\zeta e^{-c\sqrt {c_{0}t} 2^j} 2^{j\sigma}\|\Delta_{j}(e^{\sqrt {c_{0}t} \Lambda_{1}}u)\|_{L^p}\big\|_{l^{1}_{j\geq j_{0}}}
\nonumber\\ \nonumber&\lesssim&\big(\sum_{j\geq j_{0} }(\sqrt t 2^j)^\zeta e^{-\frac{c}{2}\sqrt {c_{0}t} 2^j}\big)e^{-\frac{c}{2}\sqrt {c_{0}t} 2^j}\|e^{\sqrt {c_{0}t} \Lambda_{1}}u\|^{h}_{{\dot B}^{\sigma}_{p,\infty}}\\ \nonumber&\lesssim&e^{-a\sqrt{t}}\|e^{\sqrt {c_{0}t} \Lambda_{1}}u\|^{h}_{{\dot B}^{\sigma}_{p,\infty}},
\end{eqnarray}
where we take $a=\frac{c}{2}\sqrt{c_{0}}2^{j_{0}}$. Hence, the proof of Lemma \ref{lem5.2} is complete.
\end{proof}

Finally, we state the following Gevrey estimate for composition to end this section, which will be frequently used in the sequent analysis.
\begin{prop}\label{prop5.2}
Set $1< p<\infty$, $1\leq  r\leq\infty$. Let $F$ be a real analytic function in a neighborhood of 0, such that
$F(0) = 0$. Let $-\mathrm{min} d(\frac{1}{p},\frac{1}{p'})<s<\frac{d}{p}$ with $\frac{1}{p'}=1-\frac{1}{p}$. There exist two constants $R_0$ and $D$ depending only on $d,p$ and $F$ such that if for some $T>0$,
$$\|e^{\sqrt {c_{0}t}\Lambda_1}z\|_{\tilde{L}^{\infty}_{T}(\dot{B}^{\frac{d}{p}}_{p,1})}\leq R_0$$
then
$$\|e^{\sqrt {c_{0}t}\Lambda_1}F(z)\|_{\tilde{L}^{\theta}_{T}(\dot{B}^{s}_{p,r})}\leq D\|e^{\sqrt {c_{0}t}\Lambda_1}z\|_{\tilde{L}^{\theta}_{T}(\dot{B}^{s}_{p,r})}$$
for $1\leq \theta\leq\infty$. Moreover, the case $s=\frac{d}{p}$ holds true for $r=1$.
\end{prop}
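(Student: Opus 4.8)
\textbf{Proof proposal for Proposition \ref{prop5.2}.}

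The plan is to reduce the Gevrey composition estimate to the classical (non-Gevrey) composition estimate in Besov spaces by exploiting the bilinear operators $\mathcal{B}_t$ introduced before Lemma \ref{lem5.3}. Since $F$ is real analytic near $0$ with $F(0)=0$, write $F(z)=\sum_{k\geq 1}a_k z^k$ with the series converging in a neighbourhood of $0$ whose radius we may take comparable to a fixed constant. Applying the Gevrey multiplier, the key algebraic observation is that $e^{\sqrt{c_0 t}\Lambda_1}(z^k)$ can be expressed as an iterated application of $\mathcal{B}_t$: namely $e^{\sqrt{c_0 t}\Lambda_1}(z^k)=\mathcal{B}_t\big(z, e^{\sqrt{c_0 t}\Lambda_1}(z^{k-1})\big)$ read in the appropriate sense, so that one can peel off one factor of $Z\triangleq e^{\sqrt{c_0 t}\Lambda_1}z$ at a time. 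More precisely, I would set $Z=e^{\sqrt{c_0 t}\Lambda_1}z$ and show by induction that $e^{\sqrt{c_0 t}\Lambda_1}(z^k)$ is a finite sum (indexed by $(\alpha,\beta,\gamma)\in(\{-1,1\}^d)^3$ tuples at each stage) of products of the form $K_\alpha\big(\,\cdots\big)$ built from the multipliers $K_\alpha, Z_{t,\alpha,\beta}$ applied to copies of $Z$; all these multipliers are $L^p\to L^p$ bounded for $1<p<\infty$ with norms independent of $t\geq 0$.

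The main steps are as follows. First, I would record the algebraic identity expressing $e^{\sqrt{c_0 t}\Lambda_1}(z^k)$ through $\mathcal{B}_t$ and the homogeneous-degree-zero multipliers, and note the uniform $L^p$ bounds. Second, I would invoke the standard Besov composition estimate (the non-Gevrey version of this proposition, available in \cite{BCD}): for $F$ analytic with $F(0)=0$ and $-\min d(1/p,1/p')<s<d/p$, one has $\|F(Z)\|_{\tilde L^\theta_T(\dot B^s_{p,r})}\lesssim C(\|Z\|_{\tilde L^\infty_T(\dot B^{d/p}_{p,1})})\|Z\|_{\tilde L^\theta_T(\dot B^s_{p,r})}$, where the constant depends continuously on its argument; the smallness $\|Z\|_{\tilde L^\infty_T(\dot B^{d/p}_{p,1})}\leq R_0$ keeps this constant bounded by $D$. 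Third — and this is where the Gevrey structure must be used rather than merely quoted — I would run the paraproduct/remainder analysis \emph{after} inserting the $\mathcal{B}_t$ representation: each dyadic block of $e^{\sqrt{c_0 t}\Lambda_1}(z^k)$ is decomposed via Bony's decomposition exactly as in the proof of Lemma \ref{d1}, the homogeneous multipliers $K_\alpha, Z_{t,\alpha,\beta}$ are absorbed harmlessly (they are bounded on every $L^p$, $1<p<\infty$, uniformly in $t$, and commute with Littlewood–Paley projectors up to the usual almost-orthogonality), so that the estimate for $e^{\sqrt{c_0 t}\Lambda_1}(z^k)$ in $\dot B^s_{p,r}$ is controlled by the estimate for $z^k$ with $z$ replaced by $Z$. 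Summing the resulting series $\sum_k |a_k| C^{k}\|Z\|^{k-1}_{\tilde L^\infty_T(\dot B^{d/p}_{p,1})}\|Z\|_{\tilde L^\theta_T(\dot B^s_{p,r})}$ converges provided $R_0$ is chosen smaller than the radius of convergence divided by the norm of the embedding constants, which fixes $R_0$ and $D$ in terms of $d,p,F$ only. The endpoint case $s=d/p$ with $r=1$ works because $\dot B^{d/p}_{p,1}$ is an algebra and the paraproduct remainder terms close at $r=1$.

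The main obstacle is step three: ensuring that passing the Gevrey multiplier through the nonlinearity genuinely costs nothing, uniformly in $t$ and uniformly in the multi-indices generated by iterating $\mathcal{B}_t$. The number of $(\alpha,\beta,\gamma)$-type tuples grows like $(\text{const})^k$ with the power $k$, so one must check that the operator norms of $K_\alpha$ and $Z_{t,\alpha,\beta}$ do not degrade with $k$ — they do not, since each is a fixed homogeneous-degree-zero multiplier (or a tensor product of $d$ one-dimensional ones) bounded on $L^p$ with a $t$-independent norm, and the combinatorial factor $(\text{const})^k$ is simply absorbed into the radius-of-convergence bookkeeping when choosing $R_0$. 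A secondary technical point is the Chemin–Lerner time index: since $\mathcal{B}_t$ acts pointwise in $t$, Hölder in time in the variable $\theta$ goes through verbatim as in Lemma \ref{d1}, and the $\tilde L^\infty_T(\dot B^{d/p}_{p,1})$ smallness is exactly what is needed to sum the series uniformly on $[0,T]$.
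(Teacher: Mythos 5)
The paper itself gives no proof of Proposition \ref{prop5.2}: it is simply stated at the end of Section 2 and is imported from the earlier Gevrey-analyticity literature (it is the composition estimate of \cite{CDX}, proved there by exactly the mechanism you describe). Your proposal is therefore a correct reconstruction of the intended argument rather than a divergence from it. The structure is right: expand $F(z)=\sum_{k\ge1}a_kz^k$, use the identity $e^{\sqrt{c_0t}\Lambda_1}(fg)=\mathcal{B}_t(e^{\sqrt{c_0t}\Lambda_1}f,\,e^{\sqrt{c_0t}\Lambda_1}g)$ to peel off one factor $Z=e^{\sqrt{c_0t}\Lambda_1}z$ at a time, control $\mathcal{B}_t$ via Bony's decomposition and the uniform $L^p$ bounds on $K_\alpha$ and $Z_{t,\alpha,\beta}$ (Lemma \ref{lem5.3}), and sum the series after choosing $R_0$ small relative to the radius of convergence. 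Two small streamlinings are worth making explicit. First, you do not need to unfold the full iterated $\mathcal{B}_t$ tree with its $(2^{3d})^k$-fold combinatorics: it suffices to prove, once, the two Gevrey product laws
$\|\mathcal{B}_t(F,G)\|_{\dot B^{d/p}_{p,1}}\le C\|F\|_{\dot B^{d/p}_{p,1}}\|G\|_{\dot B^{d/p}_{p,1}}$ and
$\|\mathcal{B}_t(F,G)\|_{\dot B^{s}_{p,r}}\le C\|F\|_{\dot B^{d/p}_{p,1}}\|G\|_{\dot B^{s}_{p,r}}$ for $-\min d(\frac1p,\frac1{p'})<s<\frac dp$ (these are precisely the $q=p$ instances of the scheme in Lemma \ref{d1}, and the admissibility conditions there reproduce the stated range of $s$, with $r=1$ at the endpoint $s=d/p$); a one-line induction on $k$ then yields $\|e^{\sqrt{c_0t}\Lambda_1}(z^k)\|_{\tilde L^\theta_T(\dot B^s_{p,r})}\le C^{k-1}R_0^{k-1}\|Z\|_{\tilde L^\theta_T(\dot B^s_{p,r})}$, the single fixed constant $C$ already absorbing the finite sum over $(\alpha,\beta,\gamma)$. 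Second, fix the notational slip in your key identity: the first argument of $\mathcal{B}_t$ must be $Z$, not $z$, i.e.\ $e^{\sqrt{c_0t}\Lambda_1}(z^k)=\mathcal{B}_t\bigl(Z,\,e^{\sqrt{c_0t}\Lambda_1}(z^{k-1})\bigr)$; with that correction the induction hypothesis and the smallness assumption $\|Z\|_{\tilde L^\infty_T(\dot B^{d/p}_{p,1})}\le R_0$ are exactly what the product law consumes at each step, and the Chemin--Lerner exponent $\theta$ is carried entirely by the single un-peeled factor, as you note.
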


\section{Uniform bounds on the growth of the radius of analyticity}
In this section, we establish uniform bounds on the growth of the radius of analyticity of the
solution in negative Besov norms, which is the main part in the proof of Theorem \ref{thm2.3}. More precisely, it is shown that there exists a positive constant $c_{0}$ depending only on $d, \bar{\mu}, \bar{\lambda}, \bar{\kappa},\rho^{*}$ such that for any $T>0$
\begin{eqnarray}\label{evoo}
\sup_{t\in [0,T]}\|e^{\sqrt {c_{0}t} \Lambda_{1}}(\nabla a, m)\|^{\ell}_{{\dot B}^{-\sigma_{1}-1}_{q,\infty}}\leq C_0
\end{eqnarray}
where $C_0>0$ depends on the initial norm $\|(\nabla a_{0},m_{0})\|^{\ell}_{\dot{B}^{-\sigma_{1}-1}_{q,\infty}}$. Indeed, (\ref{evoo}) is fulfilled by Theorem \ref{thm2.1} if $\sigma_{1}=2-\frac{d}{q}$, thus in what follows, we prove (\ref{evoo}) with $\sigma_{1}>2-\frac{d}{q}$.

For clarity, we agree with  $(A,U,G)\triangleq (e^{\sqrt {c_{0}t}\Lambda_1}{a}, e^{\sqrt {c_{0}t}\Lambda_1}{m}, e^{\sqrt {c_{0}t}\Lambda_1}{g})$ and denote
the energy functional by
\begin{eqnarray}\label{SX}
\bar{X}_{p,q}(T)\triangleq
\|( A,U)\|_{E^{p,q}_{T}}.
 \end{eqnarray}

We establish the following lemma which is dedicated to a priori estimate on the evolution of Gevrey regularity.
\begin{lem}\label{tol}
Let $\sigma_{1}$ fulfills $\sigma_{1}<d-\frac{d}{q}$ if $1<p\leq2$ while $\sigma_{1}\leq\frac{2d}{p}-\frac{d}{q}$ if $p>2$.
It holds that
\begin{multline}\label{energy esti}
\|(\nabla A,U)\|^{\ell}_{\tilde{L}^\infty_{T}(\dot{B}^{-\sigma_{1}-1}_{q,\infty})\cap \tilde{L}^1_{T}(\dot{B}^{-\sigma_{1}+1}_{q,\infty})} \lesssim\|(\nabla a_{0},m_{0})\|^{\ell}_{\dot{B}^{-\sigma_{1}-1}_{q,\infty}}+\big(1+\bar{X}_{p,q}(T)\big)\bar{X}^{2}_{p,q}(T)\\+\big(1+\bar{X}_{p,q}(T)\big)\bar{X}_{p,q}(T)\|(\nabla A,U)\|^{\ell}_{\tilde{L}^\infty_{T}(\dot{B}^{-\sigma_{1}-1}_{q,\infty})\cap \tilde{L}^1_{T}(\dot{B}^{-\sigma_{1}+1}_{q,\infty})}.
\end{multline}
\end{lem}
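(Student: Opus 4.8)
Here is my plan. The argument proceeds in three stages: reduce \eqref{linearized} to decoupled heat equations by introducing the effective velocity, feed those into the maximal regularity estimate of Lemma \ref{lem4.2}, and then absorb the Gevrey-lifted nonlinearity with the product and composition lemmas. Write $m=\cQ m+\cP m$, with $\cP$ the projector onto divergence-free vector fields and $\cQ\triangleq\Id-\cP$ the complementary (compressible) projector; note that $\cP$, $\cQ$ and the Gevrey multiplier $e^{\sqrt{c_{0}t}\Lambda_{1}}$ are commuting Fourier multipliers bounded on all the spaces in play. From the mass law one gets $\partial_{t}\nabla a=-\Delta\cQ m$, while $\cQ\bar{\mathcal A}m=\bar\nu\Delta\cQ m$ with $\bar\nu=\bar\lambda+2\bar\mu$ and $\cP\bar{\mathcal A}m=\bar\mu\Delta\cP m$. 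Put $w\triangleq\cQ m+\alpha\nabla a$ with $\alpha=\frac12(\bar\nu\pm\sqrt{\bar\nu^{2}-4\bar\kappa})$; this $\alpha$ is real because $\bar\nu^{2}\ge4\bar\kappa$, solves $\alpha(\bar\nu-\alpha)=\bar\kappa$, and makes both $\bar\nu-\alpha$ and $\alpha$ positive for either root. A direct computation then cancels the $\nabla\Delta a$ coupling and produces the three decoupled heat flows
\begin{equation*}
\partial_{t}w-(\bar\nu-\alpha)\Delta w=\cQ g,\qquad\partial_{t}a-\alpha\Delta a=-\mathrm{div}\,w,\qquad\partial_{t}\cP m-\bar\mu\Delta\cP m=\cP g .
\end{equation*}
Since $\cQ m=w-\alpha\nabla a$, controlling $(\nabla A,U)=e^{\sqrt{c_{0}t}\Lambda_{1}}(\nabla a,m)$ in the target space amounts to controlling $W\triangleq e^{\sqrt{c_{0}t}\Lambda_{1}}w$, $e^{\sqrt{c_{0}t}\Lambda_{1}}\cP m$ and $\nabla A$ there.

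Next I would apply Lemma \ref{lem4.2} (Lebesgue index $q$, summation index $\infty$, $\rho_{1}\in\{1,\infty\}$, $\rho_{2}=1$; the powers of the viscosity are harmless constants) to the low-frequency localizations of the three heat equations. Taking $\sigma=-\sigma_{1}-1$ for $w$ and $\cP m$ bounds their norm in $\tilde L^{\infty}_{T}(\dot B^{-\sigma_{1}-1}_{q,\infty})\cap\tilde L^{1}_{T}(\dot B^{-\sigma_{1}+1}_{q,\infty})$ by the corresponding data plus $\|e^{\sqrt{c_{0}t}\Lambda_{1}}(\cQ g,\cP g)\|^{\ell}_{\tilde L^{1}_{T}(\dot B^{-\sigma_{1}-1}_{q,\infty})}$, and taking $\sigma=-\sigma_{1}$ for the $a$-equation, noting that $e^{\sqrt{c_{0}t}\Lambda_{1}}\mathrm{div}\,w=\mathrm{div}\,W$ lies in $\tilde L^{1}_{T}(\dot B^{-\sigma_{1}}_{q,\infty})$ whenever $W\in\tilde L^{1}_{T}(\dot B^{-\sigma_{1}+1}_{q,\infty})$, bounds $\nabla A$ in that same target space by $\|\nabla a_{0}\|^{\ell}_{\dot B^{-\sigma_{1}-1}_{q,\infty}}+\|W\|^{\ell}_{\tilde L^{1}_{T}(\dot B^{-\sigma_{1}+1}_{q,\infty})}$. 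These estimates are not circular, since $\cQ g$ and $\cP g$ do not involve $w$, and the low-frequency equivalence $\|\Lambda f\|^{\ell}_{\dot B^{s}_{q,\infty}}\approx\|f\|^{\ell}_{\dot B^{s+1}_{q,\infty}}$ matches all the data terms against $\|(\nabla a_{0},m_{0})\|^{\ell}_{\dot B^{-\sigma_{1}-1}_{q,\infty}}$. Consequently \eqref{energy esti} follows once one establishes
\begin{equation*}
\|e^{\sqrt{c_{0}t}\Lambda_{1}}g\|^{\ell}_{\tilde L^{1}_{T}(\dot B^{-\sigma_{1}-1}_{q,\infty})}\lesssim(1+\bar X_{p,q}(T))\bar X^{2}_{p,q}(T)+(1+\bar X_{p,q}(T))\bar X_{p,q}(T)\,\|(\nabla A,U)\|^{\ell}_{\tilde L^{\infty}_{T}(\dot B^{-\sigma_{1}-1}_{q,\infty})\cap\tilde L^{1}_{T}(\dot B^{-\sigma_{1}+1}_{q,\infty})}.
\end{equation*}

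For this I would use the explicit form of $g(a,m)$ from Section 3: it is a finite sum of terms that are either bilinear in $(\nabla a,m)$ and their derivatives up to order two, or of the shape $I(a)\times(\text{such a bilinear term})$ with $I$ real analytic near $0$ and $I(0)=0$ — the convection, viscous, pressure (which has no linear part since $P'(\rho^{*})=0$) and Korteweg contributions — where the two-derivative terms can be written in partial divergence form. For each product I would invoke the Gevrey product estimates of Lemma \ref{d1} in their Chemin--Lerner version, splitting the time integrability via $1=\frac1r+\frac1{r'}$, assigning \emph{one} factor to a high-regularity norm that the analyticity bound \eqref{bound} of Theorem \ref{thm2.1} absorbs into $\bar X_{p,q}(T)$ (exploiting the hybrid $L^{p}$/$L^{q}$ structure of $E^{p,q}$, together with a low/high frequency split inside each factor), and the \emph{other} factor to the low-frequency negative norm $\dot B^{-\sigma_{1}-1}_{q,\infty}$ or $\dot B^{-\sigma_{1}+1}_{q,\infty}$ from the left-hand side; a composite factor $I(a)$ is first treated by Proposition \ref{prop5.2}, legitimate because $\|e^{\sqrt{c_{0}t}\Lambda_{1}}a\|_{\tilde L^{\infty}_{T}(\dot B^{\frac{d}{p}}_{p,1})}\lesssim\bar X_{p,q}(T)\le R_{0}$ in the small-data regime, and this is what produces the harmless $(1+\bar X_{p,q}(T))$ prefactors. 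Terms with every factor $\bar X$-controlled then contribute $\bar X^{2}_{p,q}(T)$ (and $\bar X^{3}_{p,q}(T)$, hence $(1+\bar X)\bar X^{2}$, for cubic or composite terms), while terms carrying one factor in the low negative norm contribute $\bar X_{p,q}(T)\,\|(\nabla A,U)\|^{\ell}_{\cdots}$; summing over the finitely many terms of $g$ gives \eqref{energy esti}.

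The main obstacle is the index bookkeeping in Lemma \ref{d1}. The low-frequency factor carries a regularity as negative as $-\sigma_{1}-1$, so the lower constraint on $s_{1}+s_{2}$ in \eqref{m1}--\eqref{m2} forces the companion regularity to sit close to $\frac{d}{p}$ (for $1<p\le2$, through \eqref{m2}) or to $\frac{2d}{p}-\frac{d}{q}$ (for $p>2$, through \eqref{m1}), which is feasible exactly under the stated upper bounds $\sigma_{1}<d-\frac{d}{q}$ and $\sigma_{1}\le\frac{2d}{p}-\frac{d}{q}$; the remaining strict upper constraints $s_{2}<\min d(\cdot)$ hold throughout the relevant range (the endpoint $\sigma_{1}=2-\frac{d}{q}$ of \eqref{evoo} being already contained in \eqref{bound}, as noted after \eqref{evoo}). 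One also has to keep the two-derivative terms genuinely in divergence form so that the two derivatives gained by the heat kernel over $\tilde L^{1}_{T}$ suffice, and to retain the sign constraint on $\alpha$ so that $\bar\nu-\alpha$, $\alpha$ and $\bar\mu$ are all positive viscosities.
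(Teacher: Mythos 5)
Your proposal is correct and follows essentially the same route as the paper: the Haspot effective velocity $w=\cQ m+\alpha\nabla a$ with $\alpha(\bar\nu-\alpha)=\bar\kappa$, maximal regularity via Lemma \ref{lem4.2} for the decoupled heat flows, and the Gevrey product/composition estimates of Lemma \ref{d1} and Proposition \ref{prop5.2} with the same index bookkeeping dictated by the hypotheses on $\sigma_1$. The only (immaterial) difference is that you close the second linear estimate through the equation $\partial_t a-\alpha\Delta a=-\div w$ and recover $\cQ m=w-\alpha\nabla a$, whereas the paper estimates $\cQ m$ from $\partial_t\cQ m-\alpha\Delta\cQ m=\frac{\bar\kappa}{\alpha}\Delta w+\cQ g$ and recovers $\nabla a=(w-\cQ m)/\alpha$.
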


\begin{proof}
\underline{\textit{Step 1: Linear analysis}}

In light of classical Leray Projector $\mathcal{P}\triangleq \mathrm{Id}-\frac{\nabla}{\Delta}\mathrm{div}$, one can write $m=\mathcal{P}m+\cQ m$, which is the sum of the incompressible part $\mathcal{P}m$ and the compressible part $\cQ m$. Note that $\nabla\div m=\Delta\cQ m$, it follows from (\ref{linearized}) that compressible part $\cQ m$ and $a$ that
$$\left\{\begin{array}{l}\d_t\nabla a+ \Delta\cQ m=0,\\[1ex]
\d_t\cQ m-\bar{\nu}\Delta\cQ m-\bar{\kappa}\Delta\nabla  a=\cQ g,
\end{array}\right.$$
where $\bar{\nu}\triangleq2\bar{\mu}+\bar{\lambda}$. In order to handle the coupling between $a$ and $\cQ m,$ inspired by \cite{H2}, let us introduce the effective velocity
$$w\triangleq \cQ m+\alpha \nabla a$$
where $\alpha=\frac{1}{2}(\bar{\nu}\pm\sqrt{\bar{\nu}^2-4\bar{\kappa}})$. Consequently, we arrive at
\begin{equation}
\d_tw-(\bar{\nu}-\alpha)\Delta w=\cQ g\label{4.2}
\end{equation}
with $\bar{\nu}-\alpha=\frac{1}{2}(\bar{\nu}\mp\sqrt{\bar{\nu}^2-4\bar{\kappa}})>0$. Denote $W\triangleq e^{\sqrt {c_{0}t}\Lambda_1}{w}$. Applying Lemma \ref{lem4.2} (taking $\sigma=-\sigma_{1}-1$ and $r=\infty$) indicates that
\begin{equation}\label{Cal}
\|W\|^{\ell}_{\tilde{L}_{T}^{\infty}(\dot{B}^{-\sigma_{1}-1}_{q,\infty})}+
\|W\|^{\ell}_{\tilde{L}_{T}^{1}(\dot{B}^{-\sigma_{1}+1}_{q,\infty})}\lesssim  \|w_0\|^{\ell}_{\dot{B}^{-\sigma_{1}-1}_{q,\infty}}+\|\cQ
G\|^{\ell}_{\tilde{L}_{T}^{1}(\dot{B}^{-\sigma_{1}-1}_{q,\infty})}.
\end{equation}
On the other hand, bounding $\mathcal{Q}m$, we use the definition of $w$ so that the equation for $\mathcal{Q}m$ can be written as
\begin{equation}\label{Qm}
\d_t\cQ m-\alpha \Delta \cQ m=\frac{\bar{\kappa}}{\alpha}\Delta w+\cQ g.
\end{equation}
Hence, one use Lemma \ref{lem4.2} again and arrive at
\begin{equation*}
\|\mathcal{Q}U\|^{\ell}_{\tilde{L}_{T}^{\infty}(\dot{B}^{-\sigma_{1}-1}_{q,\infty})}+
\|\mathcal{Q}U\|^{\ell}_{\tilde{L}_{T}^{1}(\dot{B}^{-\sigma_{1}+1}_{q,\infty})}\lesssim
\|\mathcal{Q}m_0\|^{\ell}_{\dot{B}^{-\sigma_{1}-1}_{q,\infty}}+\|w_{0}\|^{\ell}_{\dot{B}^{-\sigma_{1}-1}_{q,\infty}}
+\|\cQ G\|^{\ell}_{\tilde{L}_{T}^{1}(\dot{B}^{-\sigma_{1}-1}_{q,\infty})}.
\end{equation*}
Consequently, it follows from the fact $\nabla a=\frac{w-\cQ m}{\alpha}$ that
\begin{multline}\label{Gh2}
\|(\nabla A,\mathcal{Q}U)\|^{\ell}_{\tilde{L}_{T}^{\infty}(\dot{B}^{-\sigma_{1}-1}_{q,\infty})}+\|(\nabla A,\mathcal{Q}U)\|^{\ell}_{\tilde{L}_{T}^{1}(\dot{B}^{-\sigma_{1}+1}_{q,\infty})}\\ \lesssim
\|(\nabla a_{0},\mathcal{Q}m_{0})\|^{\ell}_{\dot{B}^{-\sigma_{1}-1}_{q,\infty}}+
\|\mathcal{Q}G\|^{\ell}_{\tilde{L}_{T}^{1}(\dot{B}^{-\sigma_{1}-1}_{q,\infty})}.
\end{multline}

In addition, regarding the incompressible part $\mathcal{P}m$, we see that
\begin{equation}\label{incompressible}\partial_{t}\mathcal{P}m-\bar{\mu}\Delta \mathcal{P}m =\mathcal{P}g.\end{equation}
Hence, applying Lemma \ref{lem4.2} to (\ref{incompressible}) yields
\begin{equation}\label{4.8r}
\|\mathcal{P}U\|^{\ell}_{\tilde{L}_{T}^{\infty}(\dot{B}^{-\sigma_{1}-1}_{q,\infty})}+\|\mathcal{P}U\|^{\ell}_{\tilde{L}_{T}^{1}(\dot{B}^{-\sigma_{1}+1}_{q,\infty})}\lesssim
\|\mathcal{P}m_{0}\|^{\ell}_{\dot{B}^{-\sigma_{1}-1}_{q,\infty}}+
\|\mathcal{P}G\|^{\ell}_{\tilde{L}_{T}^{1}(\dot{B}^{-\sigma_{1}-1}_{q,\infty})}.
\end{equation}
Combining (\ref{Gh2}) with (\ref{4.8r}) leads to
\begin{multline}\label{Gh}
\|(\nabla A,U)\|^{\ell}_{\tilde{L}_{T}^{\infty}(\dot{B}^{-\sigma_{1}-1}_{q,\infty})}+\|(\nabla A,U)\|^{\ell}_{\tilde{L}_{T}^{1}(\dot{B}^{-\sigma_{1}+1}_{q,\infty})}\lesssim
\|(\nabla a_{0},m_{0})\|^{\ell}_{\dot{B}^{-\sigma_{1}-1}_{q,\infty}}+
\|G\|^{\ell}_{\tilde{L}_{T}^{1}(\dot{B}^{-\sigma_{1}-1}_{q,\infty})}.
\end{multline}

\underline{\textit{Step 2: Nonlinear estimates}}

Next, we turn to bound nonlinear term $\|G\|^{\ell}_{\tilde{L}_{T}^{1}(\dot{B}^{-\sigma_{1}-1}_{q,\infty})}$. Here, we would like to present the explicit formulation of $g(a,m)$ (see \cite{SX}). Precisely, one can write source terms $g(a,m)=\sum\limits^{6}_{i=1} g_{i}(a,m)$ as follows:
\begin{equation}
\left\{
\begin{array}{l}g_{1}=\mathrm{div}\big((Q(a)-1)m\otimes m\big),\\ [1mm]
g_{2}=\bar{\mu}\Delta\big(Q(a)m\big)+(\bar{\mu}+\bar{\lambda})\nabla\mathrm{div}\big(Q(a)m\big),\\ [1mm]
g_{3}=2\mathrm{div}\Big(\tilde{\mu}(a)D\big((1-Q(a))m\big)\Big)+\nabla\Big(\tilde{\lambda}(a)\mathrm{div}\big((1-Q(a))m\big)\Big),\\ [1mm]
g_{4}=\nabla\big(aL(a)\big),\\ [1mm]
g_{5}=\nabla\big(\tilde{\kappa}_{1}(a)\Delta a\big),\\ [1mm]
g_{6}=\frac{\rho^{*}}{2}\nabla\Big(\big(\tilde{\kappa}_{2}(a)+\check{\kappa}\big)|\nabla a|^{2}\Big)-\mathrm{div}\Big(\big(\tilde{\kappa}_{3}(a)+\bar{\kappa}\big)\nabla
a\otimes\nabla a\Big),\\ [1mm]
 \end{array} \right.\label{nonlinear}
\end{equation}
with $\check{\kappa}\triangleq\kappa(\rho^{*})+\rho^{*}\kappa'(\rho^{*})$ and the corresponding composite functions are given by
\begin{equation}
\left\{
\begin{array}{l}Q(a)=\frac{a}{a+1},\\ [1mm]
L(a)=P'(\rho^{*}+\theta \rho^{*}a),\\ [1mm]
\tilde{\mu}(a)=\frac{2\mu(a\rho^{*}+\rho^{*})}{\rho^{*}}-2\bar{\mu},\\ [1mm]
\tilde{\lambda}(a)=\frac{\lambda(a\rho^{*}+\rho^{*})}{\rho^{*}}-\bar{\lambda},\\ [1mm]
\tilde{\kappa}_{1}(a)=(a\rho^{*}+\rho^{*})\kappa(a\rho^{*}+\rho^{*})-\bar{\kappa},\\ [1mm]
\tilde{\kappa}_{2}(a)=\kappa(a\rho^{*}+\rho^{*})+(a\rho^{*}+\rho^{*})\kappa'(a\rho^{*}+\rho^{*})-\check{\kappa},\\ [1mm]
\tilde{\kappa}_{3}(a)=\rho^{*}\kappa(a\rho^{*}+\rho^{*})-\bar{\kappa}.\\ [1mm]
 \end{array} \right.\label{composite1}
\end{equation}
Here, those functions in (\ref{composite1}) are assumed to be analytic and to be vanish at zero.

To bound nonlinear terms $g_1\sim g_6$, we first claim the following two
non classical product estimates:
\begin{eqnarray}\label{mark2}
\|e^{\sqrt {c_{0}t}\Lambda_{1}}(ab)\|_{\tilde{L}^{\rho}_{T}(\dot{B}^{-\sigma_{1}}_{q,\infty})}\lesssim\|A\|_{\tilde{L}^{\rho_{1}}_{T}(\dot{B}^{\frac{d}{p}-1}_{p,1})}
\|B\|_{\tilde{L}^{\rho_{2}}_{T}(\dot{B}^{1-\sigma_{1}+\frac{d}{p}-\frac{d}{q}}_{p,\infty})},
\end{eqnarray}
\begin{eqnarray}\label{mark1}
\|e^{\sqrt {c_{0}t}\Lambda_{1}}(ab)\|_{\tilde{L}^{\rho}_{T}(\dot{B}^{-\sigma_{1}}_{q,\infty})}\lesssim\|A\|_{\tilde{L}^{\rho_{1}}_{T}(\dot{B}^{\frac{d}{p}}_{p,1})}
\|B\|_{\tilde{L}^{\rho_{2}}_{T}(\dot{B}^{-\sigma_{1}}_{q,\infty})}
\end{eqnarray}
for all  $\rho,\rho_{1},\rho_{2}\in[1, \infty]$ satisfying $\frac{1}{\rho}=\frac{1}{\rho_{1}}+\frac{1}{\rho_{2}}$.

Indeed, recalling (\ref{exist}) and $2-\frac{d}{q}<\sigma_{1}$, one has
$$1-\sigma_{1}+\frac{d}{p}-\frac{d}{q}<\frac{d}{p}-1\leq\frac{2d}{p}-\frac{d}{q}.$$
Also, due to the assumption that $\sigma_{1}< d-\frac{d}{q}$ for $1< p\leq2$ and $\sigma_{1}\leq \frac{2d}{p}-\frac{d}{q}$ for $p>2$, it holds that
\begin{equation}
\left\{
\begin{array}{l}-\sigma_{1}+ \frac{2d}{p}-\frac{d}{q}\geq0,\,\,\,\,\,\,\,\,\,\,\,\,\,\,\,\,\,p>2;\\ [1mm]
-\sigma_{1}+ \frac{2d}{p}-\frac{d}{q}>\frac{2d}{p}-d,\,\,\,\,1< p\leq2.\\ [1mm]
 \end{array} \right.\label{composite2}
\end{equation}
Consequently, (\ref{mark2}) stems from (\ref{m1}) in Lemma  \ref{d1} with $s=-\sigma_{1}$, $s_{1}=\frac{d}{p}-1, s_{2}=1-\sigma_{1}+\frac{d}{p}-\frac{d}{q}$ and $q\leq p$. As for (\ref{mark1}), it is actually followed by (\ref{m2}) in Lemma  \ref{d1} with
$s=-\sigma_{1}, s_{1}=\frac{d}{p}, s_{2}=-\sigma_{1}$
satisfying $-\sigma_{1}<\frac{d}{q}-1\leq\frac{d}{p}$ and $q\leq p$.

Hence, we start with bounding the first term $G_{1}\triangleq e^{\sqrt {c_{0}t}\Lambda_{1}}g_1$. It is
convenient to decompose
$$\mathrm{div}(m\otimes m)=\mathrm{div}(m^{h}\otimes m)+\mathrm{div}(m^{\ell}\otimes m).$$
For the high frequency part, it is shown by (\ref{mark2}) that
\begin{multline}\label{hosto}
\|e^{\sqrt {c_{0}t}\Lambda_{1}}\mathrm{div}(m^{h}\otimes m)\|^{\ell}_{\tilde{L}^{1}_{T}(\dot{B}^{-\sigma_{1}-1}_{q,\infty})}\lesssim
\|U\|_{\tilde{L}^2_{T}(\dot{B}^{\frac{d}{p}-1}_{p,1})}\|U^{h}\|_{\tilde{L}^2_{T}(\dot{B}^{1-\sigma_{1}+\frac{d}{p}-\frac{d}{q}}_{p,\infty})}\\
\lesssim
(\|U^{h}\|_{\tilde{L}^2_{T}(\dot{B}^{\frac{d}{p}}_{p,1})}+\|U^{\ell}\|_{\tilde{L}^2_{T}(\dot{B}^{\frac{d}{q}-2}_{q,1})})
\|U^{h}\|_{\tilde{L}^2_{T}(\dot{B}^{\frac{d}{p}}_{p,1})},
\end{multline}
where the fact that $1-\sigma_{1}+\frac{d}{p}-\frac{d}{q}<\frac{d}{p}$ was used in the last inequality. Thanks to the interpolation (see \cite{BCD})
$$\tilde{L}^2_{T}(\dot{B}^{\frac{d}{p}}_{p,1})=\Big(\tilde{L}^{\infty}_{T}(\dot{B}^{\frac{d}{p}-1}_{p,1}),\tilde{L}^1_{T}(\dot{B}^{\frac{d}{p}+1}_{p,1})\Big)_{\frac{1}{2}}; \quad
\tilde{L}^2_{T}(\dot{B}^{\frac{d}{q}-2}_{q,1})=\Big(\tilde{L}^{\infty}_{T}(\dot{B}^{\frac{d}{q}-3}_{q,\infty}),\tilde{L}^1_{T}(\dot{B}^{\frac{d}{q}-1}_{q,\infty})\Big)_{\frac{1}{2}}
$$
and Theorem \ref{thm2.1}, furthermore, we arrive at
\begin{eqnarray}\label{R-E3.28}
\|e^{\sqrt {c_{0}t}\Lambda_{1}}\mathrm{div}(m^{h}\otimes m)\|^{\ell}_{\tilde{L}^{1}_{T}(\dot{B}^{-\sigma_{1}-1}_{q,\infty})}\lesssim
\bar{X}^{2}_{p,q}(T).
\end{eqnarray}
For the low frequency term with $m^{\ell}$, it follows from (\ref{mark1}) and the Sobolev embedding that
\begin{multline}\label{R-E3.29}
\|e^{\sqrt {c_{0}t}\Lambda_{1}}\mathrm{div}(m^{\ell}\otimes m)\|^{\ell}_{\tilde{L}^{1}_{T}(\dot{B}^{-\sigma_{1}-1}_{q,\infty})}\lesssim\|U\|_{\tilde{L}^1_{T}(\dot{B}^{\frac{d}{p}}_{p,1})}
\|U^{\ell}\|_{\tilde{L}^\infty_{T}(\dot{B}^{-\sigma_{1}}_{q,\infty})}\\
\lesssim(\|U\|^{h}_{\tilde{L}^{1}_{T}(\dot{B}^{\frac{d}{p}+1}_{p,1})}+\|U\|^{\ell}_{\tilde{L}^{1}_{T}(\dot{B}^{\frac{d}{q}-1}_{q,\infty})})
\|U\|^{\ell}_{\tilde{L}^{\infty}_{T}(\dot{B}^{-\sigma_{1}-1}_{q,\infty})}
\lesssim\bar{X}_{p,q}(T)\|U\|^{\ell}_{\tilde{L}^{\infty}_{T}(\dot{B}^{-\sigma_{1}-1}_{q,\infty})}.
\end{multline}
So we obtain
\begin{eqnarray}\label{R-E3.30}
\|e^{\sqrt {c_{0}t}\Lambda_{1}}\mathrm{div}(m\otimes m)\|^{\ell}_{\tilde{L}^{1}_{T}(\dot{B}^{-\sigma_{1}-1}_{q,\infty})}\lesssim\bar{X}^{2}_{p,q}(T)+\bar{X}_{p,q}(T)\|U\|^{\ell}_{\tilde{L}^{\infty}_{T}(\dot{B}^{-\sigma_{1}-1}_{q,\infty})}.
\end{eqnarray}
In addition, the composite term in $G_{1}$ can be similarly treated. Actually, (\ref{mark1}) and Proposition \ref{prop5.2} (as $\|A\|_{\tilde{L}^\infty_{T}(\dot{B}^{\frac{d}{p}}_{p,1})}$ is bounded by Theorem \ref{thm2.1}) imply that
\begin{multline}\label{R-E3.301}
\|e^{\sqrt {c_{0}t}\Lambda_{1}}\mathrm{div}(Q(a)m\otimes m)\|^{\ell}_{\tilde{L}^{1}_{T}(\dot{B}^{-\sigma_{1}-1}_{q,\infty})}\\
\lesssim\|e^{\sqrt {c_{0}t}\Lambda_{1}}Q(a)\|_{\tilde{L}^\infty_{T}(\dot{B}^{\frac{d}{p}}_{p,1})}\|e^{\sqrt {c_{0}t}\Lambda_{1}}(m\otimes m)\|_{\tilde{L}^{1}_{T}(\dot{B}^{-\sigma_{1}}_{q,\infty})}\\
\lesssim\|A\|_{\tilde{L}^\infty_{T}(\dot{B}^{\frac{d}{p}}_{p,1})}\|e^{\sqrt {c_{0}t}\Lambda_{1}}(m\otimes m)\|_{\tilde{L}^{1}_{T}(\dot{B}^{-\sigma_{1}}_{q,\infty})}.
\end{multline}
 Hence, employing those calculations as (\ref{hosto})-(\ref{R-E3.29}) to (\ref{R-E3.301}), one has
 \begin{eqnarray} \label{R-E3.3111}
\hspace{10mm} \|e^{\sqrt {c_{0}t}\Lambda_{1}}\mathrm{div}(Q(a)m\otimes m)\|^{\ell}_{\tilde{L}^{1}_{T}(\dot{B}^{-\sigma_{1}-1}_{q,\infty})}
\lesssim\bar{X}^{3}_{p,q}(T)+\bar{X}^{2}_{p,q}(T)\|U\|^{\ell}_{\tilde{L}^{\infty}_{T}(\dot{B}^{-\sigma_{1}-1}_{q,\infty})}.
\end{eqnarray}
Consequently, we add (\ref{R-E3.3111}) to (\ref{R-E3.30}) and conclude that
\begin{multline}\label{R-E3.31}
\|G_{1}(t)\|^{\ell}_{\tilde{L}^{1}_{T}(\dot{B}^{-\sigma_{1}-1}_{q,\infty})}\lesssim\big(1+\bar{X}_{p,q}(T)\big)\bar{X}^{2}_{p,q}(T)
+\big(1+\bar{X}_{p,q}(T)\big)\bar{X}_{p,q}(T)\|U\|^{\ell}_{\tilde{L}^{\infty}_{T}(\dot{B}^{-\sigma_{1}-1}_{q,\infty})}.
\end{multline}
For $G_{2}\triangleq e^{\sqrt {c_{0}t}\Lambda_{1}} g_2$, it suffices to consider the first term $e^{\sqrt {c_{0}t}\Lambda_{1}}\Delta(Q(a)m)$, since another term can be proceeded in a similar way. Precisely,
\begin{multline}\label{R-E3.32}
\|e^{\sqrt {c_{0}t}\Lambda_{1}}\Delta\big(Q(a)m\big)\|^{\ell}_{\tilde{L}^{1}_{T}(\dot{B}^{-\sigma_{1}-1}_{q,\infty})}
\lesssim\|e^{\sqrt {c_{0}t}\Lambda_{1}}\big(Q(a)\nabla m\big)\|^{\ell}_{\tilde{L}^{1}_{T}(\dot{B}^{-\sigma_{1}}_{q,\infty})}\\+\|e^{\sqrt {c_{0}t}\Lambda_{1}}\big(\nabla Q(a) m\big)\|^{\ell}_{\tilde{L}^{1}_{T}(\dot{B}^{-\sigma_{1}}_{q,\infty})}
\triangleq K_{1}+K_{2}.
\end{multline}
To handle $K_{1}$, we write $Q(a)\nabla m=Q(a)\nabla m^{h}+Q(a)\nabla m^{\ell}$. It follows from
Proposition \ref{prop5.2} and (\ref{mark2}) that
\begin{multline}\label{monet}
\|e^{\sqrt {c_{0}t}\Lambda_{1}}\big(Q(a)\nabla m^{h}\big)\|^{\ell}_{\tilde{L}^1_{T}(\dot{B}^{-\sigma_{1}}_{q,\infty})}\lesssim\|A\|_{\tilde{L}^\infty_{T}(\dot{B}^{\frac{d}{p}-1}_{p,1})}\|\nabla U^{h}\|_{\tilde{L}^1_{T}(\dot{B}^{1-\sigma_{1}+\frac{d}{p}-\frac{d}{q}}_{p,\infty})}\\
\lesssim(\|A\|^{h}_{\tilde{L}^\infty_{T}(\dot{B}^{\frac{d}{p}}_{p,1})}+\|A\|^{\ell}_{\tilde{L}^\infty_{T}(\dot{B}^{\frac{d}{q}-2}_{q,\infty})})
\|U\|^{h}_{\tilde{L}^1_{T}(\dot{B}^{\frac{d}{p}+1}_{p,1})}.
\end{multline}
It is also easy to show that by (\ref{mark1})
\begin{multline}\label{boboka}
\|e^{\sqrt {c_{0}t}\Lambda_{1}}\big(Q(a)\nabla m^{\ell}\big)\|^{\ell}_{\tilde{L}^1_{T}(\dot{B}^{-\sigma_{1}}_{q,\infty})}\lesssim\|A\|_{\tilde{L}^\infty_{T}(\dot{B}^{\frac{d}{p}}_{p,1})}\|\nabla U^{\ell}\|_{\tilde{L}^1_{T}(\dot{B}^{-\sigma_{1}}_{q,\infty})}\\
\lesssim\big(\| A\|^{h}_{\tilde{L}^\infty_{T}(\dot{B}^{\frac{d}{p}}_{p,1})}+\| A\|^{\ell}_{\tilde{L}^\infty_{T}(\dot{B}^{\frac{d}{q}-2}_{q,\infty})}\big)\|U\|^{\ell}_{\tilde{L}^1_{T}(\dot{B}^{-\sigma_{1}+1}_{q,\infty})}.
\end{multline}
As for $K_{2}$, we have
\begin{multline}\label{piqiu}
\|e^{\sqrt {c_{0}t}\Lambda_{1}}\big(\nabla Q(a)m\big)\|^{\ell}_{\tilde{L}^1_{T}(\dot{B}^{-\sigma_{1}}_{q,\infty})}\lesssim\| A\|_{\tilde{L}^2_{T}(\dot{B}^{\frac{d}{p}}_{p,1})}\|U\|_{\tilde{L}^2_{T}(\dot{B}^{1-\sigma_{1}+\frac{d}{p}-\frac{d}{q}}_{p,\infty})}\\
\lesssim(\|A^{h}\|_{\tilde{L}^2_{T}(\dot{B}^{\frac{d}{p}+1}_{p,1})}+\|A^{\ell}\|_{\tilde{L}^2_{T}(\dot{B}^{\frac{d}{q}-1}_{q,1})})
(\|U^{h}\|_{\tilde{L}^2_{T}(\dot{B}^{\frac{d}{p}}_{p,1})}+\|U^{\ell}\|_{\tilde{L}^2_{T}(\dot{B}^{-\sigma_{1}}_{q,\infty})}).
\end{multline}
Owing to the interpolation
\begin{eqnarray}\label{tuo}\|(\nabla A, U)^{\ell}\|_{\tilde{L}^2_{T}(\dot{B}^{-\sigma_{1}}_{q,\infty})}\lesssim\Big(\| (\nabla A, U)^{\ell}\|_{\tilde{L}^\infty_{T}(\dot{B}^{-\sigma_{1}-1}_{q,\infty})}\Big)^\frac{1}{2}
\Big(\| (\nabla A, U)^{\ell}\|_{\tilde{L}^1_{T}(\dot{B}^{-\sigma_{1}+1}_{q,\infty})}\Big)^\frac{1}{2},\end{eqnarray}
hence, (\ref{monet})-(\ref{tuo}) enables us to get
\begin{eqnarray} \label{R-E3.37}
\hspace{10mm}\|G_{2}(t)\|^{\ell}_{\tilde{L}^1_{T}(\dot{B}^{-\sigma_{1}-1}_{q,\infty})}\lesssim\bar{X}^{2}_{p,q}(T)+\bar{X}_{p,q}(T)\|(\nabla A,U)\|^{\ell}_{\tilde{L}^\infty_{T}(\dot{B}^{-\sigma_{1}-1}_{q,\infty})\cap \tilde{L}^1_{T}(\dot{B}^{-\sigma_{1}+1}_{q,\infty})}.
\end{eqnarray}

Bounding $G_{3}\triangleq e^{\sqrt {c_{0}t}\Lambda_{1}} g_3$ is close to $G_{2}$. It is enough to bound $$e^{\sqrt {c_{0}t}\Lambda_{1}}\mathrm{div}\big(\tilde{\mu}(a)D\big(Q(a)m\big)\big)$$ and other terms follow from similar steps. Keeping in mind that (\ref{mark1}), one has
\begin{multline}
\big\|e^{\sqrt {c_{0}t}\Lambda_{1}}\mathrm{div}\big(\tilde{\mu}(a)D\big(Q(a)m\big)\big)\big\|^{\ell}_{\tilde{L}^1_{T}(\dot{B}^{-\sigma_{1}-1}_{q,\infty})}
\lesssim\|A\|_{\tilde{L}^\infty_{T}(\dot{B}^{\frac{d}{p}}_{p,1})}
\|e^{\sqrt {c_{0}t}\Lambda_{1}}D\big(Q(a)m\big)\|_{\tilde{L}^1_{T}(\dot{B}^{-\sigma_{1}}_{q,\infty})}.
\end{multline}
Thus employing the similar estimates as (\ref{R-E3.32})-(\ref{R-E3.37}) to get
\begin{multline}
\big\|e^{\sqrt {c_{0}t}\Lambda_{1}}\mathrm{div}\big(\tilde{\mu}(a)D\big(Q(a)m\big)\big)\big\|^{\ell}_{\tilde{L}^1_{T}(\dot{B}^{-\sigma_{1}-1}_{q,\infty})}\\
\lesssim\bar{X}^{3}_{p,q}(T)+\bar{X}^{2}_{p,q}(T)\|(\nabla A,U)\|^{\ell}_{\tilde{L}^\infty_{T}(\dot{B}^{-\sigma_{1}-1}_{q,\infty})\cap \tilde{L}^1_{T}(\dot{B}^{-\sigma_{1}+1}_{q,\infty})}
\end{multline}
which indicates that
\begin{multline}\label{R-E3.388}
\|G_{3}(t)\|^{\ell}_{\tilde{L}^1_{T}(\dot{B}^{-\sigma_{1}-1}_{q,\infty})}\lesssim\big(1+\bar{X}_{p,q}(T)\big)\bar{X}^{2}_{p,q}(T)\\
+\big(1+\bar{X}_{p,q}(T)\big)\bar{X}_{p,q}(T)\|(\nabla A,U)\|^{\ell}_{\tilde{L}^{\infty}_{T}(\dot{B}^{-\sigma_{1}-1}_{q,\infty})\cap \tilde{L}^1_{T}(\dot{B}^{-\sigma_{1}+1}_{q,\infty})}.
\end{multline}

Next, we turn to estimate the pressure term $G_{4}\triangleq e^{\sqrt {c_{0}t}\Lambda_{1}} g_4$. We use the decomposition $\nabla(aL(a))=\nabla(a^{h}L(a))+\nabla(a^{\ell}L(a))$. Observe that $1-\sigma_{1}+\frac{d}{p}-\frac{d}{q}>-d\mathrm{min}(\frac{1}{p},\frac{1}{p'})$ with $\frac{1}{p}+\frac{1}{p'}=1$, it follows from Proposition \ref{prop5.2} and (\ref{mark2}) that
\begin{multline}\label{R-E3.38}
\|e^{\sqrt {c_{0}t}\Lambda_{1}}\nabla(a^{h}L(a))\|^{\ell}_{\tilde{L}^1_{T}(\dot{B}^{-\sigma_{1}-1}_{q,\infty})}\lesssim\|A\|_{\tilde{L}^\infty_{T}(\dot{B}^{1-\sigma_{1}+\frac{d}{p}-\frac{d}{q}}_{p,\infty})}
\|A\|^{h}_{\tilde{L}^1_{T}(\dot{B}^{\frac{d}{p}-1}_{p,1})}\\
\lesssim
\big(\|A\|^{h}_{\tilde{L}^\infty_{T}(\dot{B}^{\frac{d}{p}}_{p,1})}+\|A\|^{\ell}_{\tilde{L}^\infty_{T}(\dot{B}^{-\sigma_{1}}_{q,\infty})}\big)
\|A\|^{h}_{\tilde{L}^1_{T}(\dot{B}^{\frac{d}{p}+2}_{p,1})}.
\end{multline}
The corresponding low-frequency term can be estimated as
\begin{multline}\label{R-E3.39}
\|e^{\sqrt {c_{0}t}\Lambda_{1}}\nabla(a^{\ell}L(a))\|^{\ell}_{\tilde{L}^1_{T}(\dot{B}^{-\sigma_{1}-1}_{q,\infty})}
\lesssim\|A^{\ell}\|_{\tilde{L}^2_{T}(\dot{B}^{1-\sigma_{1}+\frac{d}{p}-\frac{d}{q}}_{p,\infty})}\|A\|_{\tilde{L}^{2}_{T}(\dot{B}^{\frac{d}{p}-1}_{p,1})}\\
\lesssim\Big(\|A^{h}\|_{\tilde{L}^{2}_{T}(\dot{B}^{\frac{d}{p}+1}_{p,1})}+\|A^{\ell}\|_{\tilde{L}^{2}_{T}(\dot{B}^{\frac{d}{q}-1}_{q,1})}\Big)
\|A^{\ell}\|_{\tilde{L}^2_{T}(\dot{B}^{-\sigma_{1}+1}_{q,1})}.
\end{multline}
Due to the interpolation (\ref{tuo}), furthermore,  we have
\begin{eqnarray}\label{R-E3.40}
&&\hspace{5mm}\|e^{\sqrt {c_{0}t}\Lambda_{1}}\nabla(a^{\ell}L(a))\|^{\ell}_{\tilde{L}^1_{T}(\dot{B}^{-\sigma_{1}-1}_{q,\infty})}
\lesssim\bar{X}_{p,q}(T)\big(\|\nabla A\|^{\ell}_{\tilde{L}^\infty_{T}(\dot{B}^{-\sigma_{1}-1}_{q,\infty})}
+\|\nabla A\|^{\ell}_{\tilde{L}^1_{T}(\dot{B}^{-\sigma_{1}+1}_{q,\infty})}\big).
\end{eqnarray}
Thus we conclude that
\begin{eqnarray}\label{pressure}
&&\|G_{4}(t)\|^{\ell}_{\tilde{L}^{1}_{T}(\dot{B}^{-\sigma_{1}-1}_{q,\infty})}\lesssim\bar{X}^{2}_{p,q}(T)+\bar{X}_{p,q}(T)\|\nabla A\|^{\ell}_{\tilde{L}^\infty_{T}(\dot{B}^{-\sigma_{1}-1}_{q,\infty})\cap\tilde{L}^1_{T}(\dot{B}^{-\sigma_{1}+1}_{q,\infty})}.
\end{eqnarray}

Let us look at Korteweg terms and bound $G_{5}\triangleq e^{\sqrt {c_{0}t}\Lambda_{1}} g_5$ first. Thanks to  (\ref{mark2}), we deduce
\begin{multline*}
\|G_{5}(t)\|^{\ell}_{\tilde{L}^1_{T}(\dot{B}^{-\sigma_{1}-1}_{q,\infty})}\lesssim\|A\|_{\tilde{L}^\infty_{T}(\dot{B}^{1-\sigma_{1}+\frac{d}{p}-\frac{d}{q}}_{p,\infty})}
\|\Delta A\|_{\tilde{L}^1_{T}(\dot{B}^{\frac{d}{p}-1}_{p,1})}\\
\lesssim\big(\|A\|^{h}_{\tilde{L}^\infty_{T}(\dot{B}^{\frac{d}{p}}_{p,1})}+\|A\|^{\ell}_{\tilde{L}^\infty_{T}(\dot{B}^{-\sigma_{1}}_{q,\infty})}\big)
(\|A\|^{h}_{\tilde{L}^1_{T}(\dot{B}^{\frac{d}{p}+2}_{p,1})}+
\|A\|^{\ell}_{\tilde{L}^1_{T}(\dot{B}^{\frac{d}{q}}_{q,\infty})}),
\end{multline*}
which implies that
\begin{eqnarray}\label{R-EKort}
\|G_{5}(t)\|^{\ell}_{\tilde{L}_{T}^{1}(\dot{B}^{-\sigma_{1}-1}_{q,\infty})}\lesssim\bar{X}^{2}_{p,q}(T)+\bar{X}_{p,q}(T)\|\nabla A\|^{\ell}_{\tilde{L}^\infty_{T}(\dot{B}^{-\sigma_{1}-1}_{q,\infty})}.
\end{eqnarray}
We finally estimate $G_{6}\triangleq e^{\sqrt {c_{0}t}\Lambda_{1}} g_6$. It only need to deal with $\frac{\rho^{*}\check{\kappa}}{2}e^{\sqrt {c_{0}t}\Lambda_{1}}\nabla\big(|\nabla a|^{2}\big)$. Here, we utilize (\ref{mark1}) again that
 \begin{multline}
\Big\|\frac{\rho^{*}\check{\kappa}}{2}e^{\sqrt {c_{0}t}\Lambda_{1}}\nabla\big(|\nabla a|^{2}\big)\Big\|^{\ell}_{\tilde{L}_{T}^{1}(\dot{B}^{-\sigma_{1}-1}_{q,\infty})}\lesssim\|\nabla A\|_{\tilde{L}^2_{T}(\dot{B}^{1-\sigma_{1}+\frac{d}{p}-\frac{d}{q}}_{p,\infty})}
\|\nabla A\|_{\tilde{L}^2_{T}(\dot{B}^{\frac{d}{p}-1}_{p,1})}\\
\lesssim\big(\|A\|^{h}_{\tilde{L}^2_{T}(\dot{B}^{\frac{d}{p}+1}_{p,1})}+\|A\|^{\ell}_{\tilde{L}^2_{T}(\dot{B}^{-\sigma_{1}+1}_{q,\infty})}\big)
\big(\|A\|^{h}_{\tilde{L}^2_{T}(\dot{B}^{\frac{d}{p}+1}_{p,1})}+
\|A\|^{\ell}_{\tilde{L}^2_{T}(\dot{B}^{\frac{d}{q}-1}_{q,\infty})}\big),
\end{multline}
which leads to
 \begin{multline}
\Big\|\frac{\rho^{*}\check{\kappa}}{2}e^{\sqrt {c_{0}t}\Lambda_{1}}\nabla\big(|\nabla a|^{2}\big)\Big\|^{\ell}_{\tilde{L}_{T}^{1}(\dot{B}^{-\sigma_{1}-1}_{q,\infty})} \lesssim\bar{X}^{2}_{p,q}(T)+\bar{X}_{p,q}(T)\|\nabla A\|^{\ell}_{\tilde{L}^\infty_{T}(\dot{B}^{-\sigma_{1}-1}_{q,\infty})\cap\tilde{L}^1_{T}(\dot{B}^{-\sigma_{1}+1}_{q,\infty})}.
\end{multline}
Bounding the composite terms in $G_{6}$ can enjoy the similar steps with the help of (\ref{mark1}) and Proposition \ref{prop5.2}. Consequently, we get
 \begin{multline}\label{R-E3.51}
\|G_{6}(t)\|^{\ell}_{\tilde{L}_{T}^{1}(\dot{B}^{-\sigma_{1}-1}_{q,\infty})}\lesssim(1+\bar{X}_{p,q}(T))\bar{X}^{2}_{p,q}(T)\\
+(1+\bar{X}_{p,q}(T))\bar{X}_{p,q}(T)\|\nabla A\|^{\ell}_{\tilde{L}^\infty_{T}(\dot{B}^{-\sigma_{1}-1}_{q,\infty})\cap\tilde{L}^1_{T}(\dot{B}^{-\sigma_{1}+1}_{q,\infty})}.
\end{multline}

Combining with \eqref{Gh}, \eqref{R-E3.31}, \eqref{R-E3.37}, \eqref{R-E3.388}, \eqref{pressure}, \eqref{R-EKort} and \eqref{R-E3.51}, we achieve \eqref{energy esti}. Therefore, the proof of Lemma \ref{tol} is eventually finished.
\end{proof}

Furthermore, it follows from Theorem \ref{thm2.1} that
$$\bar{X}_{p,q}(T)\lesssim \|(\nabla a_{0}, m_{0})\|_{\dot{B}^{\frac{d}{p}-1,\frac{d}{q}-3}_{(p,1)(q,\infty)}}\ll1,$$
consequently, \eqref{energy esti} implies that the claim (\ref{evoo}) is true as $\|f\|_{\widetilde{L}^{\infty}_{T}(\dot{B}^{s}_{p,\infty})}\approx\|f\|_{L^{\infty}_{T}(\dot{B}^{s}_{p,\infty})}$.

\section{The proof of Theorem \ref{thm2.3}}\setcounter{equation}{0}\label{sec:4}
The last section is devoted to the proof of Theorem \ref{thm2.3}. It follows that
\begin{eqnarray}\label{decom}\|\Lambda^{l}(a,m)\|_{L^r}\leq\|\Lambda^{l}(a,m)\|^{\ell}_{\dot{B}^{0}_{r,1}}+\|\Lambda^{l}(a,m)\|^{h}_{\dot{B}^{0}_{r,1}}.\end{eqnarray}
We first show solutions decay polynomially in the low frequencies as fast as heat kernel. In fact, by utilizing embedding $\dot{B}^{\frac{d}{q}-\frac{d}{r}}_{q,1}\hookrightarrow\dot{B}^{0}_{r,1}$ and taking $\sigma=-\sigma_{1}$ in Lemma \ref{lem5.2}, one can deduce that
\begin{eqnarray}\label{double}
\|\Lambda^{l}a\|^{\ell}_{\dot{B}^{0}_{r,1}}
\lesssim\|\Lambda^{l}a\|^{\ell}_{\dot{B}^{\frac{d}{q}-\frac{d}{r}}_{q,1}}
\lesssim \|\Lambda^{l+\tilde{\sigma}_{1}}a\|^{\ell}_{\dot{B}^{-\sigma_{1}}_{q,1}}
\lesssim t^{-\frac{l+\tilde{\sigma}_{1}}{2}}\|e^{\sqrt {c_{0}t} \Lambda_{1}}a\|^{\ell}_{\dot{B}^{-\sigma_{1}}_{q,\infty}}
\end{eqnarray}
for $-\tilde{\sigma}_{1}<l$, where $\tilde{\sigma}_{1}=\sigma_{1}-\frac{d}{r}+\frac{d}{q}$. It is also obvious from Lemma \ref{lem5.1} that for $-\tilde{\sigma}_{1}<l$
\begin{eqnarray}\label{kiko}
\|\Lambda^{l}a\|^{\ell}_{\dot{B}^{0}_{r,1}}
\lesssim\|a\|^{\ell}_{\dot{B}^{-\sigma_{1}}_{q,\infty}}
\lesssim\|e^{\sqrt {c_{0}t} \Lambda_{1}}a\|^{\ell}_{\dot{B}^{-\sigma_{1}}_{q,\infty}}.
\end{eqnarray}
Let $t_0>0$ be some transient time. Choosing $t_0$ small enough such that $t_0<\frac{1}{2}$ yields $\langle t \rangle\sim \langle t-t_{0}\rangle$. Therefore, the combination of \eqref{double}-\eqref{kiko} implies that
\begin{eqnarray}\label{33}
\|\Lambda^{l}a\|^{\ell}_{\dot{B}^{0}_{r,1}}
\lesssim \langle t-t_{0}\rangle^{-\frac{\tilde{\sigma}_{1}}{2}-\frac{l}{2}}\|e^{\sqrt {c_{0}t} \Lambda_{1}}a\|^{\ell}_{\dot{B}^{-\sigma_{1}}_{q,\infty}}.
\end{eqnarray}
Bounding the large-time behavior of momentum $m$ is totally similar:
\begin{eqnarray}\label{lh00}
\|\Lambda^{l}m\|^{\ell}_{\dot{B}^{0}_{r,1}}
\lesssim \langle t-t_{0}\rangle^{-\frac{\tilde{\sigma}_{1}}{2}-\frac{1}{2}-\frac{l}{2}}\|e^{\sqrt {c_{0}t} \Lambda_{1}}m\|^{\ell}_{\dot{B}^{-\sigma_{1}-1}_{q,\infty}}
\end{eqnarray}
for $-\tilde{\sigma}_{1}-1<l$.

On the other hand, it is shown that the decay of high frequencies of solutions is actually exponential in large time. Indeed, by taking advantage of Sobolev embedding $\dot{B}^{\frac{d}{p}-\frac{d}{r}}_{p,1}\hookrightarrow\dot{B}^{0}_{r,1}$, we infer that
\begin{eqnarray*}
\|\Lambda^{l}a\|^{h}_{\dot{B}^{0}_{r,1}}
\lesssim\|\Lambda^{l}a\|^{h}_{\dot{B}^{\frac{d}{p}-\frac{d}{r}}_{p,1}}
\lesssim\|\Lambda^{\alpha}a\|^{h}_{\dot{B}^{\frac{d}{p}}_{p,1}}
\end{eqnarray*}
for $\alpha\geq l-\frac{d}{r}$.

Furthermore, by taking $\sigma=\frac{d}{p}$ in Lemma \ref{lem5.2} for high frequencies and using the embedding $l^1\hookrightarrow l^\infty$, we immediately get
\begin{eqnarray}\label{dh}
\|\Lambda^{l}a\|^{h}_{\dot{B}^{0}_{r,1}}
\lesssim t^{-\frac{\alpha}{2}}e^{-c\sqrt{t}}\|e^{\sqrt {c_{0}t} \Lambda_{1}}a\|^{h}_{\dot{B}^{\frac{d}{p}}_{p,1}}
\end{eqnarray}
for any $\alpha\geq0$. By performing similar calculations, one has $$\|\Lambda^{l}m\|^{h}_{\dot{B}^{0}_{r,1}}\lesssim t^{-\frac{\alpha}{2}}e^{-c\sqrt{t}}\|e^{\sqrt {c_{0}t} \Lambda_{1}}m\|^{h}_{\dot{B}^{\frac{d}{p}-1}_{p,1}},$$ provided that $\alpha\geq\max\{l-\frac{d}{r}+1,0\}$.

The last step is to estimate the norm $\|\cdot\|_{L^r}$ of solution and its higher order derivatives, which is actually bounded for all $t\geq t_{0}$ by Gevrey analyticity.
Specifically speaking, keep in mind that (\ref{dh}), it holds that
 \begin{eqnarray*}
 \|\Lambda^{l}a\|^{h}_{\dot{B}^{0}_{r,1}}
\lesssim t^{-\frac{\alpha}{2}}e^{-c\sqrt{t}}\|e^{\sqrt {c_{0}t} \Lambda_{1}}a\|^{h}_{\dot{B}^{\frac{d}{p}}_{p,1}}
\end{eqnarray*}
with $\alpha\geq\max\{l-\frac{d}{r},0\}$ and $t>0$.

Notice that the function $t^{-\frac{\gamma}{2}}e^{-c\sqrt{t}}(\gamma\geq0)$ is monotonically decreasing for $t\geq t_{0}$, we are led to
 \begin{eqnarray}\label{guha}
 \|\Lambda^{l}a\|^{h}_{\dot{B}^{0}_{r,1}}
\lesssim t_{0}^{-\frac{\alpha}{2}}e^{-c\sqrt{t_{0}}}\|e^{\sqrt {c_{0}t} \Lambda_{1}}a\|^{h}_{\dot{B}^{\frac{d}{p}}_{p,1}},
\end{eqnarray}
furthermore,
 \begin{eqnarray}\label{guha}
\|\Lambda^{l}a\|^{h}_{\dot{B}^{0}_{r,1}}
 \leq C_{t_{0}} \langle t-t_{0}\rangle^{-\frac{\alpha}{2}}e^{-c\sqrt{t}}\|e^{\sqrt {c_{0}t} \Lambda_{1}}a\|^{h}_{\dot{B}^{\frac{d}{p}}_{p,1}},
\end{eqnarray}
where $C_{t_{0}}$ is some positive constant depending on $t_{0}$. Therefore, there exists some $\tilde{\alpha}>0$ large enough such that
\begin{eqnarray}\label{mlk}
 \|\Lambda^{l}a\|^{h}_{\dot{B}^{0}_{r,1}}
\leq C_{t_{0}} \langle t-t_{0}\rangle^{-\frac{\tilde{\alpha}}{2}}\|e^{\sqrt {c_{0}t} \Lambda_{1}}a\|^{h}_{\dot{B}^{\frac{d}{p}}_{p,1}}
\end{eqnarray}
for all $t\geq t_{0}$. Arguing similarly for the momentum $m$, we arrive at
\begin{eqnarray} \label{R-E4.10}\|\Lambda^{l}m\|^{h}_{\dot{B}^{0}_{r,1}}\leq C_{t_{0}} \langle t-t_{0}\rangle^{-\frac{\tilde{\alpha}}{2}}\|e^{\sqrt {c_{0}t} \Lambda_{1}}m\|^{h}_{\dot{B}^{\frac{d}{p}-1}_{p,1}}.\end{eqnarray}

Finally, by combining above estimates \eqref{33}-\eqref{lh00}, \eqref{mlk}-\eqref{R-E4.10} along with (\ref{evoo}) and Theorem \ref{thm2.1}, one can conclude \eqref{bound decay1}-\eqref{bound decay2} immediately. Hence, the proof of  Theorem \ref{thm2.3} is complete.  $\square$

\bigskip\noindent
{\bf Acknowledgments:}\ \
The second author (J. Xu) is partially supported by the National Natural Science Foundation of China (11871274, 12031006).

Zihao Song

Department of Mathematics,

Nanjing University of Aeronautics and Astronautics,

Nanjing 211106, People's Republic of China.

E-mail: szh1995@nuaa.edu.cn

\vskip 0.3cm

Jiang Xu

Department of Mathematics,

Nanjing University of Aeronautics and Astronautics,

Nanjing 211106, People's Republic of China.

E-mail: jiangxu\underline{\ \ }79@nuaa.edu.cn; jiangxu\underline{ }79math@yahoo.com

\end{document}